\documentclass{article}
\usepackage[utf8]{inputenc}
\usepackage{amsmath, amsthm, amssymb, color}
\usepackage{comment}
\usepackage[margin=1in]{geometry}

\usepackage[style=alphabetic, sortcites]{biblatex}
\renewbibmacro{in:}{}
\addbibresource{bibliographyHZ.bib}
\addbibresource{bib-mod-to-rational.bib}

\DeclareMathOperator{\im}{im}
\newcommand{\F}{\mathbb{F}} 
\newcommand{\Z}{\mathbb{Z}}
\newcommand{\R}{\mathbb{R}}
\renewcommand{\P}{\mathbb{P}}
\newcommand{\Q}{\mathbb{Q}}
\newcommand{\M}{\mathcal{M}}
\newcommand{\C}{\mathbb{C}}
\newcommand{\Grad}{\nabla}
\newcommand{\tensor}{\otimes}
\DeclareMathOperator{\codim}{codim}

\newtheorem{theorem}{Theorem}
\newtheorem{proposition}{Proposition}
\newtheorem{definition}{Definition}
\newtheorem{lemma}{Lemma}
\newtheorem{remark}{Remark}
\newtheorem*{conjecture}{Conjecture}

\title{Rational Quantum Cohomology of Steenrod Uniruled Manifolds}
\author{Semon Rezchikov}
\date{October 2021}

\begin{document}

\maketitle

\begin{abstract}
    We show that if a semipositive symplectic manifold $M^{2n}$ is \emph{Steenrod uniruled}, in the sense that the quantum Steenrod power of the point class does not agree with its classical Steenrod power for infinitely many primes, then the (rational) quantum product on $M$ is deformed. This bridges the gap between the recent advances towards the Chance-McDuff conjecture utilizing quantum Steenrod operations, and the natural formulation of the Chance-McDuff conjecture in terms of rational Gromov-Witten theory. 
\end{abstract}

\section{Introduction}
This paper shows that recent results towards the Chance-McDuff conjecture, which use equivariant Floer theory to establish results about invariants of semipositive symplectic manifolds taking values in fields of positive characteristic, imply the original formulation of the Chance-McDuff conjecture in terms of rational Gromov-Witten invariants. 

Let $(M, \omega)$ be a semipositive symplectic manifold, and let $\phi: M \to M$ be a Hamiltonian diffeomorphism with finitely many periodic points. A natural way to produce such a $\phi$ is via the flow of a Hamiltonian $H: M \to \R$ which is the moment map of an $S^1$-action on $M$; in this case, the periodic points for an irrational rotation are just the fixed points, which are the critical points of the Hamiltonian. McDuff \cite{McDuff-uniruled} proved Hamiltonian $S^1$-manifolds are \emph{strongly uniruled} in the sense that there is a nonzero 
Gromov-Witten $\langle pt, a_2, a_3 \rangle_A$ with $A \neq 0$. 

Let $\F$ be a field, and let the Novikov ring $\Lambda_\F$ consist of the series
\begin{equation}
    \sum_A c_A q^A \text{ such that } c_A \in \F, A \in im(\pi_2(M) \to H_2(M, \Z)) , 
\end{equation}
where for each term either $A = 0$ or $\int_\omega A > 0$ and such that for any real $C$, $\#\{c_A \neq 0 | \int_\omega A < C\}< \infty$. We associate the grading $|q^A|= 2c_1(A)$, although of course $\Lambda_\F$ is only a filtered ring. Then McDuff's result implies that the small quantum cohomology $QH^*(M, \Lambda_\Q)$ is \emph{not} isomorphic to $H^*(M, \Lambda_\Q)$ as a ring.

Since a Hamiltonian $H$ generating an $S^1$-action must be a perfect Morse function, the Floer chain complex $CF^*(H, \Lambda_\F)$ of $H$ has trivial differential, and is thus isomorphic to $H^*(M, \Lambda_\F)$ as a graded abelian group. This property is axiomatized via the notion of a \emph{pseudorotation}.

\begin{definition}
Let $\text{char } \F = 0$ if $M$ is not semipositive, and otherwise let $\F$ be an arbitrary field. We say that a Hamiltonian diffeomorphism $\phi: M \to M$ is a \emph{pseudorotation} (see e.g. \cite{S-PRQS}) if the set of periodic points of $\phi$ is finite and coincides with the set of fixed points, if there is identity of dimensions
\[ \sum_{x \in M: \phi(x) = x} \dim_{\Lambda_F} HF_{loc}(x, \phi) = \dim_{\Lambda_F} H^*(M, \Lambda_F) \]
where $HF_{loc}$ is the \emph{local Floer homology} of a fixed point of $\phi$ (\cite{Floer3}, \cite{ginzburg-conley}), and ever term of the sum above is strictly positive. 
\end{definition}

A basic expectation about the symplectic topology of manifolds admitting Hamiltonian diffeomorphisms with finitely many periodic points is the following
\begin{conjecture}[Chance-McDuff]
Suppose $\phi$ has finitely many periodic points. Then $QH^*(M, \Lambda_\Q) \neq H^*(M, \Lambda_\Q)$ as a ring. 
\end{conjecture}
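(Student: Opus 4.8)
The plan is to deduce the conjecture by composing two implications. The first, recalled in the introduction and resting on recent equivariant Floer theory (Shelukhin and collaborators), is that a semipositive $M$ carrying a Hamiltonian pseudorotation — and, in those settings where a Hamiltonian diffeomorphism with finitely many periodic points is known to be a pseudorotation, any such $\phi$ — is \emph{Steenrod uniruled}. The second, which is the content of this paper and which I will prove, is that Steenrod uniruledness forces the rational quantum product to be deformed (in fact that $M$ is strongly uniruled in the sense of \cite{McDuff-uniruled}). Granting both, the conjecture is immediate; all the work is in the second implication, which I now address.

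I argue the second implication by contraposition. Assume $QH^*(M,\Lambda_\Q)\cong H^*(M,\Lambda_\Q)$ as rings, i.e. $\langle a,b,c\rangle_A=0$ over $\Q$ for all classes and all $A\neq 0$. Semipositivity enters crucially here: it lets the genus-$0$ Gromov--Witten invariants be defined over $\Z$ by pseudocycle counts, compatibly with the coefficient maps $\F_p\leftarrow\Z\to\Q$, so vanishing over $\Q$ yields vanishing over $\Z$ and hence over every $\F_p$. Now analyze the quantum Steenrod power $Q(pt)$ over $\F_p$. Since $M$ carries the trivial $\Z/p$-action in the definition of the operation, a $\Z/p$-equivariant genus-$0$ sphere in $M$ is $\Z/p$-invariant, hence pulled back along the degree-$p$ branched cover $S^2\to S^2$ from a sphere $\bar u$ in a class $\bar A$ with $A=p\bar A$; the $p$ permuted input points descend to a single point carrying a $pt$-insertion, and the output point lies over a branch point. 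Propagating this through the Borel construction, the $q^{p\bar A}$-coefficient of $Q(pt)$ is a sum of products of genus-$0$ Gromov--Witten invariants of $M$ — the $i$-th factor in a class $\bar A_i$ with $\sum\bar A_i=\bar A\neq 0$, with point insertions from the input cluster and possibly a descendant class at the branch point — weighted by the same equivariant characteristic-class factors that appear in the classical construction of the Steenrod powers. All of these invariants vanish, so every quantum correction to $Q(pt)$ vanishes for every $p$; thus $Q(pt)=St(pt)$ for all $p$, contradicting Steenrod uniruledness.

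Read forwards, the argument runs: Steenrod uniruledness gives, for infinitely many $p$, a nonzero $q^{p\bar A_p}$-coefficient of $Q(pt)-St(pt)$ over $\F_p$, so some summand is nonzero, so some genus-$0$ Gromov--Witten invariant of $M$ in a class $\bar A_i\neq 0$ is nonzero mod $p$, hence — being a fixed integer — nonzero over $\Q$; a pseudocycle argument then upgrades this to strong uniruledness (a nonzero virtual count of rational curves through a point sweeps out a nonzero homology class, giving a nonzero two-point invariant $\langle pt,c\rangle_{\bar A_i}$, which the divisor axiom applied to an integral $H^2$-class pairing nontrivially with the effective class $\bar A_i$ turns into a nonzero $\langle pt,c,D\rangle_{\bar A_i}$; descendants are removed first via the string, dilaton and genus-$0$ topological recursion relations). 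The two genuinely hard points are: (a) making the Steenrod-to-Gromov--Witten dictionary precise — the compactness and transversality analysis of moduli of $\Z/p$-equivariant genus-$0$ curves, the contributions of $\Z/p$-invariant nodal and multiply-covered configurations, and checking that the equivariant weight factors are units mod $p$ for the relevant primes (the very input that makes the classical Steenrod powers nonzero); and (b) the characteristic-$p$-to-characteristic-$0$ passage, which rests on the integrality of semipositive genus-$0$ Gromov--Witten theory so that a family of mod-$p$ nonvanishing statements forces an integral one. I expect (a) to be the technical heart; once it is in place, (b) and the reduction to a bona fide strong-uniruledness invariant are bookkeeping with the standard Gromov--Witten axioms.
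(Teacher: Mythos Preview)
Two issues. First, the statement is labeled a \emph{conjecture} in the paper and is not proven there; the paper's actual result is Theorem~\ref{thm:main-theorem} (Steenrod uniruled $\Rightarrow$ rational quantum product deformed). Your first implication strings together a step that is not known in general (that a Hamiltonian diffeomorphism with finitely many periodic points is necessarily a pseudorotation) with the pseudorotation $\Rightarrow$ Steenrod uniruled results, so even granting the paper's theorem the full conjecture remains open. You acknowledge this with your parenthetical caveat, but then the proposal is not a proof of the stated conjecture.

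Second, and more seriously, your argument for the second implication rests on a misreading of the definition of $Q\Sigma$. You assert that the curves contributing to $Q\Sigma_{[M]}(1)$ are $\Z/p$-equivariant maps $C\to M$, hence factor through the degree-$p$ branched cover $C\to C/(\Z/p)$, so that $A=p\bar A$ and the count reduces to ordinary Gromov--Witten numbers in class $\bar A$. But the moduli spaces \eqref{eq:equivariant-moduli-space} consist of maps $u:C\to M$ with \emph{no} equivariance condition whatsoever; the $\Z/p$-symmetry lives only on the family of perturbation data $(J_{z,w},\eta_{z,w})$ over $S^\infty$, where the $\Z/p$-action is free, so there is no localization to invariant curves. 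In particular the class $A$ is not a priori $p$-divisible; the divisibility $A=pB+T$ (with $T$ torsion) that the paper eventually uses is a nontrivial \emph{consequence} of the covariant constancy relation~\eqref{eq:covariant-constancy} under the hypothesis that quantum multiplication is classical, not a feature of the construction. More broadly, the quantum Steenrod powers are not known to decompose as sums of ordinary genus-$0$ GW invariants weighted by equivariant constants---if they did, they would carry no information beyond $QH^*(M,\Lambda_{\F_p})$, and the entire strategy of detecting pseudorotations via $Q\Sigma$ would be vacuous. The paper's route is accordingly quite different: assuming the rational quantum product is classical, it uses Seidel--Wilkins covariant constancy together with the quantum Cartan relation to force $c_1(A)\leq 0$ for every nonclassical term of $Q\Sigma_{[M]}(1)$ (Proposition~\ref{prop:steenrod-coefficients-are-calabi-yau}), and then a Gromov-compactness and dimension-counting argument (Proposition~\ref{prop:calabi-yau-classes-cannot-unirule}) to show that such low-Chern-number classes cannot contribute. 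Neither step expresses $Q\Sigma$ in terms of ordinary GW numbers.
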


Recent partial progress towards this conjecture been restricted to the case when $\phi$ is a pseudorotation. Using one method \cite{CGG}, given additional index-theoretic conditions on the periodic points of $\phi$, one can utilize combinatorial arguments about index divisibility and iteration of the quantum product in local Floer cohomology to conclude the existence of nontrivial rational Gromov-Witten invariatns of $M$. Alternatively, several works have given conceptual arguments using \emph{equivariant localization in Floer homology} \cite{Seidel-pants} \cite{SZhao-pants} together with \emph{quantum Steenrod operations} \cite{Fukaya-Steenrod}, \cite{Wilkins} to argue the quantum Steenrod operations cannot agree with their classical counterparts.

In particular, \cite{S-PRQS} and subsequently \cite{S-PRQSR}, \cite{CGG2} establish that if $M$ is a monotone symplectic manifold admitting an $\F_2$-pseudorotation, then $M$ is \emph{$\F_2$-uniruled}, a notion introduced in \cite{S-PRQS}. We review this notion in Section \ref{sec:quantum-steenrod-review}; being $\F$-uniruled implies that for any compatible almost complex structure on $M$, there is a pseudoholomorphic sphere passing through every point of $M$.  Moreover, an upcoming paper \cite{SSW} will establish that if $M$ is a semipositive symplectic manifold admitting an $\F_p$-pseudorotation then $M$ is $\F_p$-uniruled.

The main theorem of this paper is that such results using equivariant Floer homology are sufficient to establish the Chance-McDuff conjecture formulated using rational Gromov-Witten theory. 

\begin{theorem}
\label{thm:main-theorem}
Let $(M, \omega)$ be a  symplectic manifold of dimension $2n$ which is either monotone or has minimal Chern number $N>1$, and is  $\F_p$-uniruled for infinitely many primes $p$. Then the small rational quantum product on $M$ is deformed, i.e $QH^*(M, \Lambda_\Q) \neq H^*(M, \Lambda_\Q)$ as a ring. If $M$ is not monotone and has $N=1$, then the same conclusion holds when $M$ is $\F_2$-uniruled.
\end{theorem}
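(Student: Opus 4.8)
The plan is to argue by contradiction and reduce the statement to a question about ordinary rational Gromov–Witten invariants, in the spirit of McDuff's result recalled above. So suppose $QH^*(M,\Lambda_\Q)\cong H^*(M,\Lambda_\Q)$ as rings. This is equivalent to the vanishing, over $\Q$, of all genus-zero three-point Gromov–Witten invariants $\langle T_i,T_j,T_k\rangle_A$ with $A\neq 0$. Since $M$ is monotone or has $N>1$, these are the structure constants of a genuinely $\Z$-valued quantum product: the semipositivity assumption, in this range, ensures the relevant moduli spaces carry no ghost or multiple-cover contributions; when $N=1$ one cannot in general work over $\Z$, which is the reason the theorem then restricts to $\F_2$, where the problematic $\Z/2$-isotropy is invisible. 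Consequently the vanishing over $\Q$ propagates to vanishing over $\Z$, and hence the reductions modulo every prime $p$ vanish as well; so $QH^*(M,\Lambda_{\F_p})\cong H^*(M,\Lambda_{\F_p})$ as rings for every prime $p$ (resp.\ for $p=2$ when $N=1$).

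I would now invoke Section~\ref{sec:quantum-steenrod-review}: $\F_p$-uniruledness produces, for generic $J$, a $J$-holomorphic sphere through every point of $M$, recorded by the non-vanishing over $\F_p$ of a $\Z/p$-equivariant genus-zero Gromov–Witten-type count — a ``quantum Steenrod'' count of $J$-spheres with a point insertion in a class $A\neq0$. The crux is a reduction lemma turning this into the non-vanishing of an \emph{ordinary three-point} invariant $\langle [pt],\beta,\gamma\rangle_{A'}$ with $A'\neq 0$. The mechanism is standard in spirit: a point insertion yields a non-empty family of $J$-spheres through a generic point; discarding fundamental-class insertions and peeling off divisor insertions via the divisor axiom (which preserves non-vanishing modulo $p$, since a divisor absorbed as an integer factor $D\cdot A$ must then itself be prime to $p$) reduces both the number of marked points and the degrees of the remaining constraints; pairing the surviving evaluation maps against a rational divisor $D$ with $D\cdot A'\neq 0$ — which exists because $\omega(A')>0$ forces $A'$ to be non-torsion — yields the three-point invariant, possibly after a Gromov degeneration to a class $A'$ of smaller area. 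The monotone / $N>1$ hypothesis bounds the Chern numbers $c_1(A')$ that can occur, so the triples $(A',\beta,\gamma)$ lie in a finite set.

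Combining the two steps produces the contradiction. For each of the infinitely many primes $p$ in the hypothesis the reduction lemma gives a three-point point-insertion invariant non-zero over $\F_p$; these are reductions modulo $p$ of a \emph{fixed finite list} of integers, so some integral $\langle [pt],\beta,\gamma\rangle_{A'}$ with $A'\neq 0$ is non-zero modulo infinitely many $p$, hence non-zero in $\Z$, hence non-zero in $\Q$ — contradicting the first step. The genuinely delicate point, and the reason a single prime does not suffice, is that the $\F_p$-uniruled witness is a priori an equivariant count rather than literally the reduction of an ordinary integral GW invariant; I would bridge this either by a Smith-theoretic / equivariant-localization comparison identifying the equivariant count modulo $p$ with an ordinary genus-zero count modulo $p$, or, for all but the finitely many primes dividing the torsion of $H^*(M;\Z)$, by noting that $H^*(M;\F_p)$ then carries no classes beyond reductions of rational ones, so the deformation term of the quantum Steenrod power of $[pt]$ is literally a mod-$p$ ordinary point-insertion GW invariant.

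The main obstacle is exactly this reduction lemma together with its $p$-uniformity: passing from the non-triviality over $\F_p$ of the quantum Steenrod power of the point class — an equivariant, a priori unbounded-complexity invariant — to a non-zero \emph{bounded-area three-point} rational Gromov–Witten invariant, uniformly enough in $p$ to run the pigeonhole across the infinitely many primes. This is where semipositivity and the minimal Chern number are used in an essential way — to bound areas and keep the relevant equivariant moduli spaces regular — and it is also why the $N=1$ case is isolated and treated only over $\F_2$.
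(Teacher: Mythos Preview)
Your contrapositive setup and the first step---that vanishing of the rational small quantum product forces vanishing over every $\F_p$, because the three-point invariants are integers---are correct and match the paper. The divergence, and the genuine gap, is in your ``reduction lemma''.

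You propose to convert the non-triviality of $Q\Sigma_{[M]}(1)$ over $\F_p$ into the non-vanishing of an ordinary three-point invariant $\langle [pt],\beta,\gamma\rangle_{A'}$, and then pigeonhole over primes. Neither of the mechanisms you sketch achieves this. The divisor-axiom route does not apply: in $Q\Sigma_{[M]}(1)$ the $p$ repeated inputs are the unit class, not divisors, so there is nothing to peel off, and the single constraint $[M]$ sits at a distinguished point of a $\Z/p$-equivariant domain, not at an ordinary marked point. The second route---``for torsion-free $H^*$ the deformation term is literally a mod-$p$ ordinary GW invariant''---is false: even with perfect coefficients, the terms of $Q\Sigma_{[M]}(1)$ are counts of points in the equivariant moduli spaces $\mathcal{M}_A(\Delta_i\times C,\dots)$ with $p+2$ marked points and domain-dependent perturbation data varying over $S^\infty$. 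These are not reductions of any fixed finite list of integral three-point invariants; as $p$ changes, the moduli problems themselves change, so your pigeonhole has no fixed finite set to land in.

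The paper does not attempt such a reduction. Instead it exploits two structural facts about the quantum Steenrod operations that you do not use: the \emph{covariant constancy} $\nabla_a Q\Sigma_b = Q\Sigma_b \nabla_a$ of Seidel--Wilkins and the quantum Cartan relation $Q\Sigma_b Q\Sigma_c=\pm Q\Sigma_{b*c}$. Assuming the rational (hence $\F_p$) quantum product is classical, covariant constancy forces every $q^A$ appearing in $Q\Sigma_b(c)$ to satisfy $a\cdot A\equiv 0\pmod p$ for all $a\in H^2(M;\Z)$; combined with a degree count this bounds $c_1(A)$. One then writes $[M]$ as (a scalar times) $[\omega]^{*n}$ with $[\omega]$ integral, uses Cartan to expand $Q\Sigma_{[M]}(1)=Q\Sigma_{[\omega]}^n(1)$, and inducts to show that only classes with $c_1(A)\le 0$ can survive. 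A separate dimension-count (degenerating the equivariant perturbation data to a domain-independent $J$) then shows that for $c_1(A)\le 1$ the relevant equivariant moduli spaces are empty, so all such coefficients vanish and $M$ is not $\F_p$-uniruled for the chosen $p$. In particular, the argument uses only a \emph{single} suitable prime, and the ``infinitely many primes'' in the hypothesis is there only to guarantee one such prime is available---not for any pigeonhole.
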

\begin{remark}
\label{rk:p-constraint-1}
The proof only uses a single prime $p$, and explicit descriptions of valid $p$ may be obtained from the class $[\omega]$ and the cohomological invariants of $M$. See Lemma \ref{lemma:making-omega-integral} for the main obstruction, which has to do with making sure one can reduce $[\omega]$ mod $p$.
\end{remark}

The proof uses certain structural properties of the quantum Steenrod operations elaborated in \cite{Wilkins}, \cite{Wilkins-PSS}, \cite{Seidel-formal}, and \cite{SeidelWilkins}, which review in Section \ref{sec:quantum-steenrod-review}, along with more basic facts about quantum cohomology. Section \ref{sec:steenrod-coefficients-must-be-calabi-yau} shows that if $QH^*(M, \Lambda_\Q) = H^*(M, \Lambda_\Q)$ as rings, then in the total quantum Steenrod power of the fundamental class, all coefficients of $q^A$ must be zero unless $c_1(A) < 1$. Section \ref{sec:gw-theory-review} reviews the moduli spaces defining rational and equivariant Gromov-Witten invariants.  Section \ref{prop:calabi-yau-classes-cannot-unirule} shows that if $c_1(A) \leq 0$, then the pseudoholomorphic spheres in the class $A$ cannot pass through a generic point of $M$. Combining these propositions, we conclude the proof of the main theorem below. 

\begin{proof}[Proof of Theorem \ref{thm:main-theorem}]
Suppose that $QH^*(M, \Lambda_\Q) = H^*(M, \Lambda_\Q)$. 
By Proposition \ref{prop:steenrod-coefficients-are-calabi-yau} 
the there is a prime $p$ such that the non-classical terms in $Q\Sigma_{[M]}(1)$ are $f_A q^A$ for $c_1(A) < 1$; but by Proposition \ref{prop:calabi-yau-classes-cannot-unirule}, $f_A = 0$. Thus $M$ is not $\F_p$ uniruled for this $p$. We have proven the contrapositive. 
\end{proof}

\textbf{Acknowledgements.} I thank Egor Shelukhin for his interest in this work, for pointing out an error in an initial version of the argument presented in this paper, and for his comments on this manuscript. I also thank Denis Auroux for providing feedback regarding dimension-counting arguments in the style of Section 5. Additional acknowledgements to be added later.

\section{Review of Quantum Steenrod Operations}
\label{sec:quantum-steenrod-review}
On a semipositive sympletic manifold $M$, the $3$-point GW invariants \cite{McDuffSalamon-BIG} are multilinear maps  
\begin{equation}\langle \cdot, \cdot, \cdot \rangle_A: H^{*}(M, \Z)^{\tensor 3} \to \Z;
\end{equation}
by multilinearity they factor through $(H^{*}(M, \Z)/Tors)^{\tensor 3}$. We can then define 
\begin{equation}
    \langle \cdot, \cdot, \cdot \rangle_A: H^{*}(M, \F)^{\tensor 3} \to \F;
\end{equation}
by setting $\langle x, y, z \rangle_A =0$ whenever one of $(x,y,z)$ does not land in $Im(H^{*}(M, \Z)/Tors \xrightarrow{\tensor \F} H^*(M, \F))$, and otherwise by lifting $(x,y,z)$ to integral cohomology and applying the homomorphism $\Z \to \F$ to the resulting Gromov-Witten invariant.

These invariants give maps
\begin{equation}
\begin{gathered}
    *_A:  H^*(M, \F)^{\tensor 2} \to H^{*-2c_1(A)}(M, \F) \\ 
    \int_M x *_A y z = \langle x, y, z \rangle_A
\end{gathered}
\end{equation}
which when extended $\Lambda_\F$-linearly and combined into 
\begin{equation}
    x * y = \sum_{A} (x *_A y) q^A
\end{equation}
give the $\Lambda_\F$-module $H^*(M, \Lambda_\F)$ the structure of a commutative $\Lambda_\F$-algebra -- the \emph{small quantum cohomology} $QH^*(M, \Lambda_\F)$ of $M$. 

The discussion above establishes the following

\begin{lemma}
If $\langle x, y, z \rangle_A = 0$ for all $A \neq 0$ and $\F=\Q$, then this holds for any other field $\F$.
\end{lemma}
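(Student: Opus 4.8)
The plan is to push everything down to the integral Gromov--Witten invariant $\langle\cdot,\cdot,\cdot\rangle_A \colon (H^{*}(M,\Z)/Tors)^{\tensor 3}\to\Z$, from which, by construction, all the $\F$-valued invariants are obtained. First I would observe that the hypothesis over $\Q$ says precisely that the multilinear map $H^{*}(M,\Q)^{\tensor 3}\to\Q$ is identically zero for every $A\neq 0$. Restricting to triples in the image of $H^{*}(M,\Z)/Tors$ and using that $\Z\to\Q$ is injective, this forces $\langle x,y,z\rangle_A=0$ in $\Z$ for all integral classes $x,y,z$; that is, the integral invariant itself vanishes for every $A\neq 0$.

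Second, I would simply unwind the definition of $\langle\cdot,\cdot,\cdot\rangle_A$ over an arbitrary field $\F$ as given in the text. On a triple $(x,y,z)\in H^{*}(M,\F)^{\tensor 3}$ the value is either set to $0$ by fiat --- when some entry does not lie in $\im(H^{*}(M,\Z)/Tors \xrightarrow{\tensor\F} H^{*}(M,\F))$ --- or it is the image under the ring homomorphism $\Z\to\F$ of $\langle\tilde x,\tilde y,\tilde z\rangle_A$ for any integral lifts $\tilde x,\tilde y,\tilde z$. Since the integer $\langle\tilde x,\tilde y,\tilde z\rangle_A$ has just been shown to be $0$, in both cases the value is $0$. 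Hence $\langle\cdot,\cdot,\cdot\rangle_A$ is the zero map over $\F$ for all $A\neq 0$, which is the claim; in particular the $\Lambda_\F$-linear extension of the $A\neq 0$ part of the quantum product vanishes, so $QH^{*}(M,\Lambda_\F)=H^{*}(M,\Lambda_\F)$ as rings.

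There is no serious obstacle here --- the content is bookkeeping through the definitions. The only point that needs a moment's care in general is the well-definedness of the lift-and-reduce recipe, since two integral lifts of the same class in $H^{*}(M,\F)$ differ by an element of the kernel of $\tensor\F$ and hence by a class mapping to $0$ under $\Z\to\F$; but once the integral invariant is known to vanish identically this is automatic. One could equivalently state the lemma as: every $\F$-valued three-point invariant factors through the integral invariant $\langle\cdot,\cdot,\cdot\rangle_A$, which vanishes for $A\neq 0$ iff its $\Q$-reduction does, because $\Z\hookrightarrow\Q$.
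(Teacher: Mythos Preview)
Your proof is correct and follows essentially the same route as the paper: both reduce to the integral invariant via the definition, using that it vanishes by fiat on non-liftable classes and on torsion lifts, and otherwise coincides (through $\Z\hookrightarrow\Q$) with the rational invariant, which is zero by hypothesis. Your version is slightly more explicit in first isolating the vanishing of the integral invariant and then reducing mod $p$, whereas the paper compresses these into a single line, but the content is the same.
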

\begin{proof}
If one of $\{x,y,z\}$ will does not lift to integral cohomology then the invariant is zero by definition. If they lift, then the invariant is zero any of the lifts are torsion classes. If none of the lifts are torsion classes, then the invariant agrees with the rational Gromov-Witten invariant associated to the image of the lifts in rational cohomology. 
\end{proof}

One defines the small quantum connection on $H^*(M, \Lambda)[[t]]$ via 
\begin{equation}
    \Grad_a \gamma = t \partial_a \gamma + a * \gamma. 
\end{equation}
where $*$ is extended $t$-linearly and $|t|=2$. Here $a \in H^2(M, \Z)$ and $\partial_a q^A = (a \cdot A) q^A$.

The Quantum Steenrod Operations are certain maps depending on a prime $p$
\begin{equation}
    Q\Sigma_b: H^*(M, \F_p) \to (H^*(M, \Lambda) [[t,\theta]])^{*+p|b|}
\end{equation}
where $b \in H^*(M, \F_p)$ and $|t|=2$, $|\theta|=1$, which $q$-deform the usual operations in the sense that their reduction modulo the maximal ideal of $\Lambda_\F$ agrees with the operation 
\begin{equation}
    H^*(M, \F_p) \ni x \mapsto St_p(b)x
\end{equation}
where $St_p$ is the total Steenrod power 
\begin{equation}
    St_p: H^{|b|}(M, \F_p) \to (H^*(M, \F_p) [[t,\theta]])^{p|b|}. 
\end{equation}
We use the conventions of \cite{Seidel-formal} for $St_p$; up to explicit signs and nonzero constants, the coefficients of the terms of $St_p$ are the usual Steenrod powers $P^i$ or their Bocksteins $\beta P^i$ (or alternatively the Steenrod squares $Sq^i$ if $p=2$) defined as in \cite{hatcher}.

\begin{definition}[\cite{S-PRQS}]
We say that $M$ is $\F_p$-uniruled if 
\begin{equation}
Q\Sigma_{[M]}(1) \neq St_p([M])(1) = \pm [M]t^{(p-1)n}. 
\end{equation}
where $[M]$ is the cohomology class Poincare-dual to a point (the ``point class''). See \cite[Eq. 10.9]{Seidel-formal} for a formula for the sign.
\end{definition}
This implies that for every compatible almost complex structure on $M$ there is is a pseudoholomorphic sphere through every point of $M$; see Remark \ref{remark:compactness-uniruled}. 

The classical and quantum Cartan relations are the respective identities
\begin{equation}
    St_p(a)St_p(b) = (-1)^{c(a, b)}St_p(ab), \; Q\Sigma_b Q\Sigma_c = (-1)^{c(a, b)}  Q\Sigma_{b * c}, \text{ where } c(a, b) = |a||b|\frac{p(p-1)}{2},
\end{equation}
where we extend all of the above maps to endomorphisms of $H^*(M, \Lambda) [[t,\theta]]$ by $t, \lambda, q^A$-linearity, and we extend  
the lower argument of $Q\Sigma$ to take $\Lambda_\F$-linear sums by requiring that \begin{equation}
    Q\Sigma_{b}= \sum_A q^{pA} Q\Sigma_{b_A}\text{ for }b = \sum_A b_A a^A.  
\end{equation}

Recently, Seidel and Wilkins \cite{SeidelWilkins} have established that \emph{the quantum Steenrod operations are covariantly constant with respect to the quantum connection}, namely that we have an identity
\begin{equation}
\label{eq:covariant-constancy}
    Q\Sigma_b \Grad_a = \Grad_a Q \Sigma_b.
\end{equation}

Let $I_{diff, p} \subset \Lambda_{\F_p}$ be  the ideal generated by those elements $q^A$ such that $\partial_a q^A$ for all $a \in H^2(M, \Z)$. Equation (\ref{eq:covariant-constancy}) immediately implies 
\begin{lemma}
\label{lemma:basic-restriction}
Suppose that $\langle x, y, z \rangle_A = 0$ for all $A \neq 0$. Then $Q\Sigma_b c= St(b)c \mod I_{diff}$. 
\end{lemma}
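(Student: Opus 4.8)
The plan is to read off the result directly from the covariant constancy identity (\ref{eq:covariant-constancy}) by evaluating both sides on the constant section and reducing modulo the differential ideal. First I would observe that under the hypothesis $\langle x, y, z \rangle_A = 0$ for all $A \neq 0$, the quantum product $*$ coincides with the classical cup product: indeed, the hypothesis forces $x *_A y = 0$ for all $A \neq 0$, so $x * y = x \cup y$ (and in particular quantum multiplication by $a \in H^2(M,\Z)$ is just classical cup product with $a$). Consequently the quantum connection takes the form $\Grad_a \gamma = t\partial_a \gamma + (a \cup \gamma)$, where the first term is the only place any $q^A$-dependence enters.

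Next I would spell out the two sides of (\ref{eq:covariant-constancy}) applied to a section of the form $c$ (viewed as a constant in the $q$-variables), for $a \in H^2(M,\Z)$ and $b \in H^*(M,\F_p)$. Write $Q\Sigma_b(c) = \sum_A \sigma_{b,A}(c)\, q^A$ with $\sigma_{b,A}(c) \in H^*(M,\F_p)[[t,\theta]]$, so that the $q^0$-coefficient is $\sigma_{b,0}(c) = St_p(b)c$ by definition of the quantum Steenrod operations as $q$-deformations. Applying $\Grad_a$ on the left gives $\sum_A\big( t(a\cdot A)\sigma_{b,A}(c) + a\cup \sigma_{b,A}(c)\big) q^A$, while $Q\Sigma_b(\Grad_a c) = Q\Sigma_b(a \cup c)$; since the classical Steenrod power satisfies the Cartan relation and commutes appropriately with cup product by degree-two classes (this is the $q^0$-part of the quantum Cartan relation, or simply the classical Cartan formula), the $q^A$-coefficient of the right-hand side is $a \cup \sigma_{b,A}(c)$ up to the classical identities. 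Matching $q^A$-coefficients for each fixed $A$ then yields $t(a\cdot A)\,\sigma_{b,A}(c) = 0$ for all $a \in H^2(M,\Z)$; since $t$ is not a zero divisor, $\sigma_{b,A}(c) = 0$ whenever there exists $a$ with $a \cdot A \neq 0$. By definition $I_{diff,p}$ is generated by exactly those $q^A$ for which $a\cdot A = 0$ for all $a$, so the surviving terms of $Q\Sigma_b(c) - St_p(b)c$ all lie in $I_{diff,p}$, which is the claim.

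The main subtlety — and the step I would be most careful about — is the bookkeeping in matching $q^A$-coefficients of $Q\Sigma_b(a\cup c)$ with $a \cup \sigma_{b,A}(c)$: one must use that multiplication by $a$ is classical (so it contributes nothing to the $q$-grading) and that $St_p$ intertwines with this multiplication via the classical Cartan relation, so that no new $q^A$-dependence or cross-terms appear. A secondary point is that (\ref{eq:covariant-constancy}) is stated for $Q\Sigma_b$ with $b \in H^*(M,\F_p)$ and $\Grad_a$ with $a \in H^2(M,\Z)$, and one should check the identity is $\Lambda$-linear and $t$-linear so that evaluating on the constant section $c$ is legitimate; this is immediate from the way both operators are defined. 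Everything else is formal manipulation of power series in $t, \theta, q^A$.
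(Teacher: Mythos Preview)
Your overall strategy matches the paper's: use covariant constancy \eqref{eq:covariant-constancy} to force $\partial_a Q\Sigma_b(c)=0$ for every $a\in H^2(M,\Z)$, which is exactly the statement that all surviving $q^A$ lie in $I_{diff,p}$. You are also more careful than the paper in one respect: you correctly record that under the hypothesis $\Grad_a = t\partial_a + a\cup(\cdot)$, whereas the paper's one-line proof simply writes ``$\Grad_a = t\partial_a$''.

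However, there is a genuine gap precisely at the step you flag as ``the main subtlety''. You assert that the $q^A$-coefficient of $Q\Sigma_b(a\cup c)$ is $a\cup\sigma_{b,A}(c)$, appealing to the classical Cartan relation. That relation only controls the $q^0$-part; for $A\neq 0$ the coefficient $\sigma_{b,A}(a\cup c)$ is an equivariant curve count with input $a\cup c$, and nothing classical tells you it equals $a\cup\sigma_{b,A}(c)$. In fact, in the paper's own notation (equations \eqref{eq:pi-relation-1}--\eqref{eq:pi-relation-2} from \cite{SeidelWilkins}) one has, for $c$ constant in $q$,
\[
Q\Sigma_b(a* c) - a* Q\Sigma_b(c) \;=\; t\,\Pi_{a,b}(c) \;=\; t\,\partial_a Q\Sigma_b(c).
\]
Under the hypothesis $a* = a\cup$, so your claimed cancellation $Q\Sigma_b(a\cup c)=a\cup Q\Sigma_b(c)$ is \emph{equivalent} to $\partial_a Q\Sigma_b(c)=0$, which is exactly what you are trying to prove. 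The argument as written is therefore circular. The paper's terse proof hides the same issue by dropping the $a\cup$ term from $\Grad_a$ without comment; so your write-up does not introduce a new error, but it does not close the gap either. To make the argument honest one must invoke the separate \cite{SeidelWilkins} identity $\Pi_{a,b}(c)=\partial_a Q\Sigma_b(c)-Q\Sigma_b(\partial_a c)$ (or an equivalent input) rather than the classical Cartan formula.
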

\begin{proof}
The assumption implies that $\Grad_a = t\partial_a$. Since $t$ is not a zero-divisor (\ref{eq:covariant-constancy}) implies that for any $x \in H^*(M, \F_p)$, 
\begin{equation}
    \partial_a Q\Sigma_b(x) = Q\Sigma_b(\partial_a x) = 0. 
\end{equation}
\end{proof}

\section{Steenrod Coefficients must be Calabi-Yau}
\label{sec:steenrod-coefficients-must-be-calabi-yau}

In this section, we prove
\begin{proposition}
\label{prop:steenrod-coefficients-are-calabi-yau}
Let $(M, \omega)$ be semipositive, and assume that the small quantum cohomology of $M$ is classical. Then, if $M$ is monotone or has minimal chern number $N>1$, then there are infinitely many primes $p$ such that if $q^A$ has a nonzero-coefficient in $Q\Sigma_{[M]}(1)$ for this prime, then $c_1(A) \leq 0$. If $M$ is not monotone and has $N=1$, then this conclusion holds for $p=2$.
\end{proposition}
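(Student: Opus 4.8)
The plan is to run the logic of the introduction backwards, starting from Lemma~\ref{lemma:basic-restriction}, feeding in the covariant constancy~(\ref{eq:covariant-constancy}) and the Cartan relation, and then choosing the prime $p$ via Lemma~\ref{lemma:making-omega-integral}. First I would record the free consequences of the hypotheses. Since $QH^*(M,\Lambda_\Q)=H^*(M,\Lambda_\Q)$, all Gromov--Witten invariants $\langle x,y,z\rangle_A$ with $A\neq0$ vanish over $\Q$, hence over every $\F_p$ by the first Lemma of Section~\ref{sec:quantum-steenrod-review}; thus Lemma~\ref{lemma:basic-restriction} applies and gives, using $St_p([M])(1)=\pm[M]t^{(p-1)n}$,
\[ Q\Sigma_{[M]}(1)\;=\;\pm[M]\,t^{(p-1)n}\;+\;\sum_{A\neq0}f_A\,q^A,\qquad\text{where }q^A\in I_{diff,p}\text{ whenever }f_A\neq0. \]
Unwinding $I_{diff,p}$ over $\F_p$ means $a\cdot A\equiv0\pmod p$ for all $a\in H^2(M,\Z)$; in particular $p\mid c_1(A)$, and $p\mid(\eta\cdot A)$ for every $\eta\in H^2(M;\Z)$, and (Novikov condition) $A\neq0$ with $\int_\omega A>0$. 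Moreover $Q\Sigma_{[M]}(1)$ is homogeneous of degree $2pn$ in $H^*(M,\Lambda_{\F_p})[[t,\theta]]$, so each $f_A$ is a homogeneous polynomial in $t,\theta$ of degree $2pn-2c_1(A)$; hence $f_A\neq0$ forces $c_1(A)\le pn$, and the $t^{\,pn-c_1(A)}$--coefficient of $f_A$, if nonzero, lies in $H^0$.

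Next I would squeeze out structural relations. With the product classical, $\Grad_a=t\partial_a+a\cup(-)$, and $\partial_aQ\Sigma_{[M]}(1)=0$ since its terms lie in $I_{diff,p}$; applying (\ref{eq:covariant-constancy}) to $x\in H^{\le2}(M,\F_p)$ shows that every $q^A$--coefficient of $Q\Sigma_{[M]}$ with $A\neq0$ is a cup-module endomorphism (over the subalgebra generated by $H^{\le2}$), and one should check this pushes through to the statement that $Q\Sigma_{[M]}$ acts on $H^*(M,\F_p)$ as cup product with $Q\Sigma_{[M]}(1)$. Combined with the Cartan relation and classicality --- $[M]*[M]=[M]\cup[M]=0$, hence $Q\Sigma_{[M]}\circ Q\Sigma_{[M]}=\pm Q\Sigma_{[M]*[M]}=0$ --- this yields the identity $Q\Sigma_{[M]}(1)^{\cup2}=0$ in $H^*(M,\Lambda_{\F_p})[[t,\theta]]$. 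Writing $s:=\sum_{A\neq0}f_Aq^A$, using $[M]\cup[M]=0$ and $[M]\cup f_A=\gamma_A[M]t^{\,pn-c_1(A)}$ for a scalar $\gamma_A\in\F_p$, the identity becomes
\[ s^{\cup2}\;=\;\mp2\sum_{A}\gamma_A\,[M]\,t^{(2p-1)n-c_1(A)}\,q^A . \]
The right-hand side is a multiple of the point class and so squares to zero, while reading the left-hand side at the bottom of the Novikov filtration expresses the leading correction term as a square-zero element supported in top cohomological degree, which the grading above tightly constrains; for $p=2$ the factor $2$ vanishes and $s^{\cup2}=0$ outright.

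Now I would choose the prime. By Lemma~\ref{lemma:making-omega-integral}, for all but finitely many $p$ one can pick $\eta\in H^2(M;\Z)$ positive on the relevant curve classes with controlled reduction mod $p$ --- concretely, $\eta=\mu[\omega]$ on $\Pi:=\im(\pi_2(M)\to H_2(M,\Z))$ for an integer $\mu\ge1$ with $p\nmid\mu$, so $\eta\cdot A=\mu\int_\omega A$ --- and one may further demand $p\nmid N$ and $p$ large. Then $p\mid\eta\cdot A$ forces $\int_\omega A\ge p/\mu$, pushing the correction terms deep into the Novikov filtration, while $c_1(A)\in N\Z\cap p\Z=Np\Z$ forces a positive $c_1(A)$ to be $\ge Np$. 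Feeding these bounds into $Q\Sigma_{[M]}(1)^{\cup2}=0$ and matching Novikov order, $t$--degree, and the congruence $c_1(A)\equiv0\pmod{Np}$ term by term --- the leading correction, a top-degree multiple of $[M]$, cannot cancel against anything and must vanish, after which one induces down the filtration --- eliminates every $f_A$ with $c_1(A)>0$. When $M$ is monotone, $\int_\omega A>0\iff c_1(A)>0$, so this says $s=0$ and $Q\Sigma_{[M]}(1)=\pm[M]t^{(p-1)n}$ exactly; when $N>1$ it leaves only terms with $c_1(A)\le0$. The case $N=1$, $M$ not monotone, is precisely where Lemma~\ref{lemma:making-omega-integral} can be arranged only for $p=2$, explaining the restriction there.

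I expect the main obstacle to be the bookkeeping in the third step: both the cheap upper bound $c_1(A)\le pn$ and the lower bound $c_1(A)\ge Np$ grow linearly in $p$, so no contradiction comes from size alone --- it must be extracted from the identity $Q\Sigma_{[M]}(1)^{\cup2}=0$ together with tight control over where $[\omega]$ reduces modulo $p$, and getting Lemma~\ref{lemma:making-omega-integral} to supply a usable $\eta$ for infinitely many $p$ (and only $p=2$ when $N=1$ and $M$ is not monotone) is the technical heart. A secondary point requiring care is the treatment of torsion in $H^*(M;\Z)$ and of the gap between the divisibility of $A$ and $c_1(A)$; these should be absorbed into the set of admissible primes, as indicated in Remark~\ref{rk:p-constraint-1}.
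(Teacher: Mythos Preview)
Your proposal has two genuine gaps, and the paper's argument is organized quite differently.

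First, the step ``one should check this pushes through to the statement that $Q\Sigma_{[M]}$ acts on $H^*(M,\F_p)$ as cup product with $Q\Sigma_{[M]}(1)$'' is not justified. Covariant constancy (\ref{eq:covariant-constancy}) and the operation $\Pi_{a,b}$ only give $H^2$-linearity; there is no reason for $Q\Sigma_{[M]}(c)=c\cup Q\Sigma_{[M]}(1)$ when $c$ is not in the subalgebra generated by $H^2$. In particular you would need to apply $Q\Sigma_{[M]}$ to $[M]$ itself and to the correction terms $f_A$, none of which need lie in that subalgebra.

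Second, even granting $Q\Sigma_{[M]}(1)^{\cup2}=0$, this does not force the correction terms to vanish. A term $f_A=c\,[M]\,t^{k}$ (or $c\,[M]\,t^k\theta$) squares to zero since $[M]\cup[M]=0$, so $s=c\,[M]\,t^{k}q^A$ with $c_1(A)=Np>0$ is perfectly consistent with $s^{\cup2}=0$. Your sentence ``the leading correction, a top-degree multiple of $[M]$, cannot cancel against anything and must vanish'' is exactly backwards: such a term is square-zero and survives the identity. The inequalities $Np\le c_1(A)\le pn$ leave room for this, and nothing in your outline closes that window.

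The paper avoids both problems by \emph{not} working with $Q\Sigma_{[M]}$ directly. After making $[\omega]$ integral with $[\omega]^n=K[M]$, $K\not\equiv0\pmod p$, it writes $Q\Sigma_{[M]}(1)=K^{-1}(Q\Sigma_{[\omega]})^n(1)$ via the Cartan relation. The point is that $Q\Sigma_{[\omega]}(1)$ has total degree $2p$ rather than $2pn$, so the degree bound becomes $|f_i|+2c_1(A_i)=2p$ with $p\mid c_1(A_i)$, forcing $c_1(A_i)\in\{0,p\}$; then either monotonicity or $N>1$ kills the $c_1=p$ terms (Lemma~\ref{lemma:fundamental-constraint}). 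One then inducts on $k$ in $(Q\Sigma_{[\omega]})^k(1)$, using the $\Pi_{a,b}$ relation to pass powers of $[\omega]$ through $Q\Sigma_{[\omega]}$ --- here $H^2$-linearity is precisely what is available and precisely what is needed. Finally, your explanation of why $p=2$ appears when $N=1$ and $M$ is not monotone is off: Lemma~\ref{lemma:making-omega-integral} does not restrict the prime there. The restriction arises because the inductive step fails (the $c_1=p$ terms cannot be excluded), and the paper instead uses a Poincar\'e-duality decomposition $[M]=ab$ together with a separate geometric analysis (Propositions~\ref{prop:bad-case} and \ref{prop:bad-case-2}) carried out specifically at $p=2$.
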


We recall that the semipositivity condition (also refered to as \emph{weak monotonicity} \cite{HS-Novikov}) means \cite[Exercise 6.4.3]{McDuffSalamon-BIG} that either
\begin{enumerate}
    \item $M$ is monotone: $c_1(M)(A) = \kappa [\omega](A)$ for some $\kappa > 0$, for every $A \in \pi_2(M)$;
    \item $M$ is spherically Calabi-Yau, i.e. $c_1(M)(A) = 0$ for every $A \in \pi_2(M)$; or
    \item $M$ has minimal Chern number $N \geq n-2$. 
\end{enumerate}

The proof will require several cases, and in the worst case ($n=3$, $N=1$) we will use an analytical result proven in the final section. Before we proceed, we discuss how to make $[\omega]$ into an integral class.

\begin{lemma}
\label{lemma:making-omega-integral}
If $(M^{2n}, \omega)$ is a semipositive symplectic manifold then there is a constant $P>0$ such that for all $p > P$, $\omega$ can be deformed through semipositive symplectic forms into a symplectic form $\tilde{\omega}$ such that $[\tilde{\omega}]$ is integral and such that $[\tilde{\omega}]^n \neq 0 \mod p$. 
\end{lemma}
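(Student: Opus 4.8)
The plan is to perturb $\omega$ slightly so that its cohomology class becomes rational, then clear denominators. The first step would be to recall that the symplectic condition is open: any form $C^0$-close (in fact, any form whose class is close to $[\omega]$, via e.g. a Moser-type argument on the level of classes times a fixed symplectic representative) remains symplectic, and semipositivity is a condition on the minimal Chern number and the monotonicity/Calabi-Yau dichotomy, which are themselves homotopy-invariant or open under small deformation. Concretely, I would write $[\omega]$ as a point in $H^2(M;\R)$ and pick a rational class $\lambda \in H^2(M;\Q)$ close enough to $[\omega]$ that $\lambda$ is still represented by a symplectic form $\omega_\lambda$ obtained from $\omega$ by a small deformation (here one needs $M$ to be closed, so that $H^2(M;\R)$ is finite-dimensional and the symplectic cone is open); one must check that the path from $[\omega]$ to $\lambda$ stays in the symplectic cone and that semipositivity is preserved along it — in the monotone case one should instead just take $\lambda$ proportional to $c_1(M)$ (which is integral up to torsion, hence rational), so monotonicity is preserved exactly, and in the Calabi-Yau or $N \geq n-2$ cases semipositivity only depends on the minimal Chern number, which is a topological invariant unchanged by the deformation.

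Next I would clear denominators: choose an integer $k$ so that $k\lambda$ is an integral class (modulo torsion one can always do this), and set $[\tilde\omega] = k[\omega_\lambda]$, represented by the symplectic form $k\omega_\lambda$. Rescaling by a positive integer preserves the symplectic and semipositivity conditions. This gives an integral class $[\tilde\omega]$ deformation-equivalent to $\omega$ through semipositive symplectic forms.

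The remaining point — and the one requiring a little care for the "$\neq 0 \bmod p$" clause — is the nonvanishing of $[\tilde\omega]^n$ modulo $p$. Since $\tilde\omega$ is a symplectic form on a closed $2n$-manifold, $\int_M [\tilde\omega]^n > 0$, so $[\tilde\omega]^n$ is a nonzero integer multiple $m$ of the generator of $H^{2n}(M;\Z)/\mathrm{Tors} \cong \Z$. This integer $m$ depends on the choices made (the rational approximation $\lambda$ and the denominator $k$), but once those are fixed, $m$ is a fixed nonzero integer, hence divisible by only finitely many primes. Therefore for all primes $p$ exceeding $P := \max(k, |m|, \text{primes dividing the order of } \mathrm{Tors}\, H^{2n}(M;\Z))$ — or more simply any $p$ larger than all prime factors of $m$ and coprime to the torsion — the reduction of $[\tilde\omega]^n$ mod $p$ is nonzero in $H^{2n}(M;\F_p)$. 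One subtlety is that $P$ must be chosen uniformly: the construction of $\lambda$ and $k$ should be carried out first, yielding a single $\tilde\omega$, and only then is $P$ read off; so the lemma should be understood as asserting existence of one such $\tilde\omega$ working for all large $p$, which matches the statement.

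The main obstacle is the first step: ensuring the deformation from $\omega$ to a rational class can be done \emph{while staying semipositive}. In the monotone case this is clean (deform toward $c_1$). In the non-monotone semipositive cases, semipositivity reduces to a bound on the minimal Chern number versus $n$, which is unaffected by deformation, so the only real content is the openness of the symplectic cone in $H^2(M;\R)$ for closed $M$ — standard, but worth stating. I would therefore structure the proof as: (1) openness/rationality approximation of $[\omega]$ with the monotone case handled separately; (2) clearing denominators by an integer rescaling; (3) the elementary number-theoretic observation that a fixed nonzero integer volume is a unit mod all sufficiently large $p$.
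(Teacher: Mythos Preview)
Your outline matches the paper's proof in structure and in most of its content: approximate $[\omega]$ by a rational class through semipositive symplectic forms, rescale by a positive integer to make the class integral, then take $P$ to be the resulting integer volume $K$ with $[\tilde\omega]^n = K[M]$. Your handling of the spherically Calabi--Yau and $N \geq n-2$ cases is correct and coincides with the paper's, since in those cases semipositivity is a purely topological condition on $c_1$ and is preserved under any deformation of $\omega$.

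The monotone case, however, has a gap. You propose to ``take $\lambda$ proportional to $c_1(M)$'' and to ``deform toward $c_1$'', but monotonicity only asserts $[\omega] = \kappa^{-1}c_1$ on \emph{spherical} classes; on the rest of $H^2(M;\R)$ the two may differ by an arbitrary amount, so neither $\kappa^{-1}c_1$ nor the straight-line path to it need lie in the symplectic cone. The paper repairs this by splitting $H^2(M;\R) = (H^2_S(M) \otimes \R) \oplus (H^2_S(M)^\perp \otimes \R)$, where $H^2_S(M)$ is dual to the image of $\pi_2(M)$ in $H_2$. One first rescales $\omega$ by the real number $\kappa$, making the spherical component of the class equal to $(c_1)_S$ and hence integral; one then perturbs only the $H^2_S(M)^\perp$-component to a nearby rational point. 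This second move is a small perturbation (so it stays symplectic by openness of the cone) and leaves the spherical pairing untouched (so it preserves monotonicity exactly). A final integer rescaling clears the remaining denominators. This spherical/perpendicular decomposition is the manoeuvre missing from your monotone step; with it in place, the rest of your argument goes through as written.
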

\begin{proof}
Suppose $M$ is monotone or spherically Calabi-Yau. Write $H^2_S(M) = Im(\pi_2(M) \to H_2(M, \Z)/Tor)$. Write $H^2_S(M) = Hom(H^2_S(M), \Z)$, and $H^2_S(M)^\perp = \ker(Hom(H_2(M, \Z)/Tors, \Z) \to H^2_S(M)$. Then $H^2(M, \R) = H^2_S(M) \tensor \R \oplus H^2_S(M)^\perp \tensor \R$. Under this decomposition write $[\omega] = [\omega]_S + [\omega]_S^\perp$. 

If $M$ is monotone or spherically Calabi-Yau, since $c_1(M)$ is integral we can let $\tilde{\omega}_1$ to be a rescaling of $\omega$ so that $[\tilde{\omega}_1]_S$ an integral class. In general, the set of cohomology classes of symplectic forms deformation equivalent to $\omega$ projects to a subset of $H^2(M, \R)$ which contains an open subset $U$ of $[\omega]$. The image of $H^2_S(M)^\perp$ in $H^2_S(M)^\perp \tensor \R$ is dense; thus, we can perturb $\tilde{\omega}_1$ among semi-positive symplectic forms with cohomology classes in $U \cap ([\tilde{\omega}_1]_S + H^2_S(M)^\perp \tensor \R)$ to a symplectic form $\tilde{\omega}$ such that $[\tilde{\omega}_1]_S = [\tilde{\omega}_S]$, while $[\tilde{\omega}]^\perp_S$ is rational. Thus $[\tilde{\omega}]$ is rational. Rescaling $\tilde{\omega}$ by an integer we can assume that $\tilde{\omega}$ is integral. On the other hand, if $N \geq n-2$ then then any other symplectic form on $M$ is semipositive; therefore, we can also deform $\omega$ to a rational symplectic form through semipositive symplectic forms by a small perturbation, and subsequently rescale to get an integral symplectic form.

Write $[\tilde{\omega}]^n = K [M]$; since $\tilde{\omega}$ is symplectic, $K \neq 0$. Then $P=K$ will do. 
\end{proof}
\begin{remark}
Continuing on Remark \ref{rk:p-constraint-1}, the factors of $p$ are allowed are a problem in the approximation of $\omega$ by rational classes, and in concrete cases, one may follow the reasoning of the lemma above to derive explicit primes $p$ for which the conclusion of the lemma is valid. In the remainder of the proof we will have no further conditions on $p$, unless $N=1$ and $n=3$, where we will change our strategy and take $p=2$.
\end{remark}

\begin{remark}
\label{rk:deformation-invariance}
The rational Gromov-Witten invariants of a semipositive symplectic manifold are independent of symplectic form up to deformation through semipositive symplectic forms \cite{McDuffSalamon-BIG}. This holds because one can use almost complex structures \emph{tamed} by $\omega$ to define the Gromov-Witten moduli spaces, and because the set of almost complex structures tamed by $\omega$ is open in the set of almost complex structures and correspondingly the set of symplectic forms taming a fixed almost complex structure is open in the space of symplectic forms. The quantum Steenrod powers as defined in \cite{SeidelWilkins} require one to choose $\omega$-compatible almost complex structures; but for the Gromov compactness results used in the paper, $\omega$-tame almost complex structures would suffice. Thus, one can show that the deformations of Lemma \ref{lemma:making-omega-integral} preserve the property of the underlying symplectic manifold being Steenrod uniruled for any given prime $p$.
\end{remark}

\begin{proof}[Proof of Proposition \ref{prop:steenrod-coefficients-are-calabi-yau}]

If $c_1(M) = 0$ the conclusion follows. 

Otherwise we will prove the contrapositive; thus, assume that $\langle x, y, z\rangle_A = 0$ for all $A \neq 0$. Then quantum multiplication agrees with the classical cup product.

First, suppose $\omega$ is integral. Note that (reducing $[\omega]$ mod $p$),  $St_p([\omega]) = G_p([\omega])$ for $G_p(z) \in \F_p[z]$ an explicit polynomial. Indeed, there is a map $f: M \to \C P^\infty$ such that $f^*c_1 = [\omega]$, where $H^*(\C P^\infty, \F_p) = \F_p[c_1]$. The claim follows because the Steenrod operations are natural.

We will repeatedly utilize the following 
\begin{lemma}
\label{lemma:fundamental-constraint}
Let $a \in H^2(M, \F)$; then if $p>n$ we can write 
\begin{equation}
    Q\Sigma_a(b) = St_p(a)b + \sum_{i} f_i a^{A_i} + \sum_j g_j q^{B_j}
\end{equation}
where $c_1(A_i) = 0$, $c_1(B_j) = p$, $A_i \neq 0$, and $f_i, g_j \in (t, \theta) H^*(M, \F)[[t, \theta]]$. If $b=1$ or $N>1$ then $g_j =0$ for each $j$.  
\end{lemma}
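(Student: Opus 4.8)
The plan is to exploit the covariant constancy of the quantum Steenrod operations (Equation \ref{eq:covariant-constancy}) together with the hypothesis that quantum multiplication is classical. Under that hypothesis, by Lemma \ref{lemma:basic-restriction} we already know that $Q\Sigma_a(b) = St_p(a) b$ modulo the ideal $I_{diff}$ generated by the $q^A$ with $a \cdot A = 0$ for all $a \in H^2(M,\Z)$. So the non-classical terms of $Q\Sigma_a(b)$ that survive are supported on classes $A$ with $a \cdot A = 0$ for every integral degree-two class $a$; the point is then to pin down the Chern numbers of such $A$. The two cases in the conclusion — $c_1(A_i) = 0$ versus $c_1(B_j) = p$ — should correspond respectively to the ``genuinely $\omega$-orthogonal'' part and to a part that can only show up through a $q^{pA}$ coming from $St_p$ of a $\Lambda$-linear argument, i.e. from the Cartan-type extension $Q\Sigma_b = \sum_A q^{pA} Q\Sigma_{b_A}$; this is why one needs $b = 1$ or $N > 1$ to kill the $g_j$.

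First I would set things up with $\omega$ integral (legitimate by Lemma \ref{lemma:making-omega-integral} and Remark \ref{rk:deformation-invariance}) and use the naturality computation $St_p([\omega]) = G_p([\omega])$ recorded just before the lemma. Then I would apply covariant constancy with $a = [\omega]$: since $\Grad_{[\omega]} = t\partial_{[\omega]}$ under the classical-multiplication hypothesis, and $\partial_{[\omega]} q^A = ([\omega]\cdot A)\, q^A = (\int_\omega A)\, q^A$, the operator $\partial_{[\omega]}$ acts on the $q^A$-summand of $Q\Sigma_a(b)$ by the scalar $\int_\omega A$. Commuting $\partial_{[\omega]}$ past $Q\Sigma_a$ and using that $\partial_{[\omega]}$ annihilates classes in $H^*(M,\F_p)$ forces every non-classical $q^A$ appearing to have $\int_\omega A = 0$. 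For an integral symplectic class with $A$ a spherical class of positive area this is impossible unless $A=0$, so at first glance there are no non-classical terms at all — which is too strong. The resolution is that the extended operation $Q\Sigma_a$ on $H^*(M,\Lambda)[[t,\theta]]$ also sees $q$-powers coming from the source: writing out the definition $Q\Sigma_b = \sum q^{pA}Q\Sigma_{b_A}$ and the Cartan relation, the genuinely new terms enter with $q$-exponent of the form $pA' + (\text{output class})$, and the bookkeeping of $\partial_{[\omega]}$-eigenvalues then only forces the \emph{output} exponent to be $\omega$-orthogonal, hence (by integrality and sphericality) zero or — in the shifted indexing — a class of area accounted for by a $p$-fold multiple. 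Translating ``$\omega$-orthogonal spherical, hence $c_1 = 0$'' uses monotonicity in case (1), spherical Calabi–Yau trivially in case (2), and in case (3) uses that the only spherical classes with $\int_\omega A = 0$ are $A = 0$; the residual $c_1(B_j) = p$ terms are exactly the ones carrying a factor $q^{pB}$ with $B$ a spherical class of minimal Chern number, so $c_1(B_j) = p c_1(B) \geq p$, and the constraint $c_1(B_j) = p$ together with $b=1$ or $N>1$ (which would force $c_1(B) \geq 2$, i.e. $c_1(B_j) \geq 2p$, a contradiction) kills them.

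The hypothesis $p > n$ enters to control the polynomial $G_p$ and the degrees: the total Steenrod power $St_p([\omega])$ on a manifold of real dimension $2n$ only involves $P^i$ with $i \le n/(p-1) < 1$, so $St_p([\omega]) = \pm[\omega]$ up to the $t$-normalization, which keeps the classical part transparent and prevents spurious $t$-degree contributions from masquerading as $q$-deformations. I would also need the structural input from \cite{Wilkins}, \cite{SeidelWilkins} that $Q\Sigma_a(b)$ is, term by term, a sum $St_p(a)b + \sum f_i q^{A_i}$ with the $f_i \in (t,\theta)H^*(M,\F)[[t,\theta]]$ (the quantum corrections carry strictly positive $t$ or $\theta$ degree), which is what lets us separate the genuinely classical term cleanly.

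The main obstacle will be the careful $q$-exponent bookkeeping: disentangling which powers of $q$ in $Q\Sigma_a(b)$ come from the Gromov–Witten ``interior'' of the operation versus from the $q^{pA}$ reindexing in the definition of $Q\Sigma$ on $\Lambda$-linear arguments, and checking that the $\partial_{[\omega]}$-eigenvalue argument applies to exactly the right piece. This is the step where the distinction between the $c_1(A_i) = 0$ family and the $c_1(B_j) = p$ family is forced, and where the ``$b = 1$ or $N>1$'' hypothesis does its work; I expect the proof to spend most of its effort there, with the covariant-constancy and naturality inputs being comparatively formal.
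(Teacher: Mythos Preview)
Your starting point---invoke Lemma~\ref{lemma:basic-restriction}, so the nonclassical part of $Q\Sigma_a(b)$ lies in $I_{diff,p}$---is correct, but after that the argument goes astray. The condition $q^A \in I_{diff,p}$ means $a'\cdot A \equiv 0 \pmod p$ for every $a' \in H^2(M,\Z)$, which says exactly that $A = pB + T$ with $T$ torsion; it does \emph{not} say $\int_\omega A = 0$ integrally. So your ``too strong'' conclusion really is wrong, and the attempted repair via the $q^{pA}$-reindexing and the Cartan relation is a red herring: in this lemma both $a$ and $b$ are ordinary cohomology classes, and no $\Lambda$-linear extension of the subscript is involved. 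Your proposed trichotomy (monotone / spherically Calabi--Yau / $N\ge n-2$) translating ``$\omega$-orthogonal'' into ``$c_1=0$'' is then built on this misreading and does not go through.

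The paper's argument is instead a short degree count. Writing each exponent as $A = pB + T$, the term $f\,q^{A}$ has total degree $|b|+2p$ (since $|a|=2$), so $|f| + 2p\,c_1(B) = |b| + 2p$. Because $|f|\ge 0$ and $|b|\le 2n < 2p$ (\emph{this} is where $p>n$ is used, not in simplifying $St_p$), one gets $c_1(B)\le 1$. The lower bound $c_1(A)\ge 0$ is the geometric input of Remark~\ref{rk:no-negative-Chern-numbers} (semipositivity), not any $\omega$-orthogonality. This leaves $c_1(B)\in\{0,1\}$, i.e.\ $c_1(A)\in\{0,p\}$, which is the claimed decomposition. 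The assertion that the coefficients lie in $(t,\theta)H^*[[t,\theta]]$ is not a prior structural fact from \cite{Wilkins} or \cite{SeidelWilkins} as you suggest; it follows from the hypothesis, since the $t^0\theta^0$-part of $Q\Sigma_a(b)$ is $a^{*p}*b$, which equals $a^p b$ because the quantum product is classical. Finally, if $N>1$ then $N\mid c_1(B)$ forces $c_1(B)\le 0$; and if $b=1$ and $c_1(B)=1$ then $|f|=0$, so $f$ lies in $H^0$ with no $t,\theta$, and the previous sentence kills it.
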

\begin{proof}
We have that $Q\Sigma_{a}b= St(a)b + f$
for $f \in I_{diff, p}$ of degree $2p$. Write $f = \sum_i f_i q^{A_i} + \ldots $ for $\partial_a q^{A_i} = (a \cdot A_i) a^{A_i} = 0$, $A_i \neq 0$, and  $f_i \in H^*(M, \F)[[t, \theta]]$. Write $A_i = pB_i + T_i$ where $T_i$ is a torsion class, and suppose $f_i \neq 0$.

We see that $|f_iq^{A_i}| = |f_i|+2 c_1(pB_i+T_i) =|f_i| + 2pc_1(B_i) = |b|+2p$. Dividing by $2p$ and using that $p>n$ we see that $c_1(B_i) \leq 1$. So if $N>1$ we have that $c_1(A_i) \leq 0$.

By Remark \ref{rk:no-negative-Chern-numbers} we actually have that if $c_1(A_i) \geq 0$ for any $i$. So we have established the theorem when $N>1$.

Moreover, writing $f_i \in f^0_i + (t, \theta)$ we have
that $f^0_i=0$ since 
\begin{equation}
\sum_i f^0_i a^{A_i}= a^{*n}*b - a^nb = 0
\end{equation}
since again, the quantum product is classical. By degree considerations this proves the last sentence when $b=1$.

\end{proof}

In the case where $M$ is monotone or when $N>1$, we apply Lemma \ref{lemma:making-omega-integral}, and replace $\omega$ with $\tilde{\omega}$ as in the lemma. By Remark \ref{rk:deformation-invariance}, this preserves the rational Gromov-Witten invariants of $M$, as well as the property of being $\F_p$-Steenrod uniruled.   We then have that $[\omega]^{* n} = [\omega]^n = K[M]$. Note that $Q\Sigma_{[M]}(1) = (1/K)Q\Sigma_{[\omega]^{*n}}(1)$, which makes sense since $0 \neq K \in \F_p$ is invertible.

We have by Lemma \ref{lemma:fundamental-constraint} that
\begin{equation}
    Q\Sigma_{[\omega]}(1) = St_p([\omega]) + \sum_i f_i q^{A_i} = G_p([\omega]) + \sum_i f_i q^{A_i}, \; c_1(A_i) = 0, A_i \neq 0. 
\end{equation}

This is the base case of an induction on $k$ establishing that 
\begin{equation}
\label{eq:inductive-hypothesis}
    Q\Sigma_{[\omega]^k}(1) = Q\Sigma_{[\omega]}^k(1) = St([\omega]^k)+ \sum_i f_i q^{A_i},\; c_i(A_i) = 0, A_i \neq 0. 
\end{equation}

In the case that $M$ is monotone we further have that $f_i = 0$ in the base case and the inductive hypothesis. 

Now, for any $a \in H^2(M, \Z)$, there is an operation \cite{SeidelWilkins}
\begin{equation}
    \Pi_{a, b}: H^*(M, \F_p) \to (H^*(M)[[t, \theta]])^{*+|a|-2+p|b|}
\end{equation}
satisfying
\begin{equation}
\label{eq:pi-relation-1}
    t \Pi_{a, b}(c) = Q\Sigma_b(a * c) - a* Q\Sigma_b(c).
\end{equation}
Moreover, it also satisfies
\begin{equation}
\label{eq:pi-relation-2}
    \Pi_{a, b}(c) = \partial_a Q\Sigma_b(c) = 0.
\end{equation}
Since $t$ is not a zero-divisor we conclude that $Q\Sigma_b(\omega^k) = \omega^k Q \Sigma_b(1)$ for any $k \geq 0$.

Thus assuming (\ref{eq:inductive-hypothesis}) we have that
\begin{equation}
    Q\Sigma_{[\omega]} Q\Sigma_{[\omega]}^k(1) = Q\Sigma_{[\omega]}( G_p({[\omega]})^k) + \sum_i q^{A_i}Q\Sigma_{[\omega]}(f_i) =
    G_p({[\omega]})^kQ\Sigma_{{[\omega]}}(1) + \sum_i q^{A_i}Q\Sigma_{[\omega]}(f_i).
\end{equation}

In the case that $N > 1$ we see that $g_i = 0$ in Lemma \ref{lemma:fundamental-constraint}; applying that lemma to the terms $Q\Sigma_{[\omega]}(f_i)$ in formula above we see that the inductive hypothesis is preserved. On the other hand, if we are in the monotone case the $f_i$ were zero and thus applying the formula from the base case we see that the inductive hypothesis has been preserved as well. This establishes the induction and for $k=n$ proves the theorem when $M$ is monotone or when $N>1$.

The remaining case is $N=1$ and $M$ is not monotone, which implies that $2 \leq n \leq 3$.

Suppose $n =2$. Then by Poincare duality we can find a pair of classes $a, b \in H^2(M, \F_p)$ such that $ab = [M]$. Choosing  $p>n=2$, we use quantum Cartan to write $Q\Sigma_{[M]}(1) = Q\Sigma_bQ\Sigma_{a}(1)$ and conclude the proposition by applying Lemma \ref{lemma:fundamental-constraint} twice. 

The final and trickest case is if $n=3$. In this case we will specialize to $p=2$.  By Poincare duality we can find a pair of classes $a\in H^4(M,\Z)$, $b\in H^2(M, \Z)$ such that $ab=[M]$ integrally; and thus the same holds for the mod-$2$ reductions of these classes.

We will decompose $Q\Sigma_{[M]}(1) = Q\Sigma_bQ\Sigma_a(1)$. Let us study the terms in 
\begin{equation}
    Q\Sigma_a(1) = St_2(a) + \sum_i f_iq^{A_i}, A_i \neq 0. 
\end{equation}
Then $St_2(a) = t^2Sq_0(a) + t\theta Sq_1(a) + Sq_2(a).$  Since $a$ is integral, $Sq_1(a)=0$ since $Sq_1$ is the Bockstein. For the remaining terms we have $|f_iq^{A_i}| = 4$, so arguing as in Lemma \ref{lemma:fundamental-constraint} we write $A_i = pB_i + T_i$ where $T_i$ is torsion, and conclude that $|f_i| + 4c_1(B_i) = 8$. We write $f_i = \theta^{x_i} t^{y_i} f'_i$ with $f'_i \in H^*(M, \F_2)$. Then $|x_i| + |y_i| > 0$, and in particular $|f_i|>0$. Thus $c_1(B_i) \leq 1$. If $c_1(B_i) = 1$ then $|f'_i|\leq 3$. We will establish by an analytical argument in Section \ref{sec:uniruling-cohomology-classes} that
\begin{proposition}
\label{prop:bad-case}
In the situation described above, $f_i=0$ for each $i$.
\end{proposition}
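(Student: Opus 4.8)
The plan is to reinterpret each $f_i$ as a $\Z/2$-equivariant genus-zero Gromov--Witten-type count of $J$-holomorphic spheres and then show it vanishes purely for dimensional reasons, exploiting that $n=3$ is small; the delicate point will be controlling the Gromov compactification in the borderline semipositive case $N=n-2=1$.

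Recall the situation (with $p=2$): $f_i$ is the coefficient of $q^{A_i}$ in $Q\Sigma_a(1)$, where $a\in H^4(M;\F_2)$ is the reduction of an integral class, $A_i=2B_i+T_i$ with $T_i$ torsion and $c_1(B_i)=1$ --- so $c_1(A_i)=2$ --- and $f_i=\theta^{x_i}t^{y_i}f_i'$ with $f_i'\in H^{\le 3}(M;\F_2)$ and $x_i+2y_i\ge 1$ (the classical piece $|f_i'|=4$ having been excluded). Since it suffices to prove $\langle f_i',c\rangle=0$ for every $c\in H^{6-|f_i'|}(M;\F_2)$, and all such $c$ satisfy $|c|\ge 3$, I would first unwind, via the moduli-theoretic description of the quantum Steenrod operations reviewed in Section~\ref{sec:gw-theory-review} (following \cite{SeidelWilkins}, \cite{Wilkins}), the fact that $\langle f_i',c\rangle$ is, up to a unit, the $\Z/2$-equivariant count of $J$-holomorphic spheres $u$ in class $A_i$ with marked points $z_1,z_2,z_0,z_\infty$ --- the pair $z_1,z_2$ a free $\Z/2$-orbit, $z_0,z_\infty$ fixed by $\Z/2$ --- subject to $u(z_1),u(z_2)\in Z$, $u(z_\infty)\in Z'$ and $u(z_0)$ unconstrained, where $Z\subset M$ is a cycle Poincar\'e dual to $a$ and $Z'$ one dual to $c$. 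Over $\F_2$ we may take $Z,Z'$ to be images of smooth closed manifolds, of real dimension $2$ and $6-|c|$; and since the $\Z/2$-symmetry of the Steenrod construction acts only on the domain and on the parameter sphere and not on $M$, a generic domain-independent $\omega$-tame $J$ is automatically equivariant, so we may compute with such a $J$ and generic $Z,Z'$.

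The core of the argument is the dimension count. Forgetting the unconstrained point $z_0$, the relevant moduli space is (a fixed-domain version of) $\overline{\M}_{0,3}(A_i;J)$, of virtual real dimension $2n-6+2c_1(A_i)+6=10$ for $n=3$, $c_1(A_i)=2$. The constraints $u(z_1),u(z_2)\in Z$ have codimension $2n-\dim Z=4$ each, and $u(z_\infty)\in Z'$ has codimension $|c|\ge 3$, for a total codimension $\ge 11>10$. Hence for generic $J$ and generic cycles the cut-down moduli space is empty; the $\Z/2$-equivariant space fibres over $\R\P^\infty$ with this empty fibre, hence is itself empty, so the count --- the number of points of a codimension-$(x_i+2y_i)$ slice of it over $B\Z/2$ --- is $0$. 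The ``diagonal'' contributions intrinsic to the Steenrod construction, in which the two-point constraint along $Z\times Z$ is replaced by the Steenrod refinement of $a\times a$ supported on $\Delta\subset M\times M$ (codimension $2n=6$), only impose further conditions and contribute nothing for the same reason.

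What remains --- and this is the step I expect to be the main obstacle --- is to check that the Gromov compactification contributes nothing either, which is precisely where $n=3$ and $N=1$ enter, this being the tight case $N=n-2$ of semipositivity. A stable degeneration of a class-$A_i$ sphere is either a single nonconstant component in class $A_i$ with ghost bubbles, or a nodal curve splitting $A_i=C'+C''$ with $c_1(C')=c_1(C'')=1$, or a multiple cover of a simple curve of class $A_0$ with $c_1(A_0)=1$ --- no others, since $c_1$ is positive on nonconstant spherical classes and $N=1$. For the nodal strata, semipositivity ($N=n-2$) guarantees, just as in the construction of the operations, that they have codimension $\ge 2$ in the compactified moduli space and so do not perturb a count governed by the ten-dimensional principal stratum; alternatively one checks directly that imposing the constraints on such a stratum either forces the evaluation into a subset of $M^3$ of dimension $\le 8<11$ (when $z_1,z_2,z_\infty$ lie on nonconstant components), or requires a common image point to lie in $Z\cap Z'=\varnothing$ (when they all collapse onto a ghost component, using $|c|\ge 3$). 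For the multiply-covered strata the same idea applies: writing $u=v\circ\phi$ with $v$ simple of class $A_0$, the triple $(u(z_1),u(z_2),u(z_\infty))$ equals $(v(p_1),v(p_2),v(p_3))$ and so lies in the evaluation image of $\overline{\M}_{0,3}(A_0;J)$, of dimension $\le 2n-6+2c_1(A_0)+6=8<11$, again empty after imposing the constraints. Transversality for the simple-curve moduli spaces of classes $A_i,C',C'',A_0$ with generic $J$ is standard (Section~\ref{sec:gw-theory-review}) and ghost components are handled as usual, so the compactified equivariant moduli space is empty; hence $\langle f_i',c\rangle=0$ for all $c$, whence $f_i'=0$ and $f_i=0$ for every $i$ (the same count, with extra room, also disposes of the $c_1(B_i)=0$ terms). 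In short, the easy and substantive point is the negative dimension count --- on a six-manifold a pseudoholomorphic sphere of Chern number two cannot be forced through a surface at two prescribed marked points and through a further positive-codimension cycle at a third --- while the real work is the compactness bookkeeping in the borderline semipositive case and fixing a transversality package compatible with the $\Z/2$-equivariance built into $Q\Sigma$.
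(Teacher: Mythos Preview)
Your overall strategy---degenerate the equivariant perturbation data to a domain-independent $J$, then rule out each stratum of the Gromov compactification by a dimension count---is exactly the paper's approach, and your arithmetic for the principal stratum ($10<11$) is correct. However, your compactness bookkeeping has a genuine gap in precisely the stratum that makes this proposition hard.

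Consider the configuration where the root component is a ghost mapping to a point $q\in M$, carrying the two input markings $z_1,z_2$ (and $z_0$), with a single nonconstant bubble of Chern number $2$ attached at the node carrying $z_\infty$. Both input constraints $u(z_1),u(z_2)\in Z$ collapse to the single condition $q\in Z$, so you lose one of the two codimension-$4$ constraints. The bubble then lies in a $2$-pointed moduli space of simple Chern-$2$ spheres, of dimension $2n+2c_1+2\cdot 2-6=8$, with the node constrained to $Z$ (codimension $4$) and $z_\infty$ constrained to $Z'$ (codimension $|c|\ge 3$): the cut-down dimension is $8-4-(6-|f'_i|)=|f'_i|-2$, which is $1$ when $|f'_i|=3$ and $0$ when $|f'_i|=2$. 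Neither is negative, so this stratum is \emph{not} ruled out by your argument. Your sentence ``requires a common image point to lie in $Z\cap Z'=\varnothing$ (when they all collapse onto a ghost component)'' only treats the case where all three constrained markings collapse; it says nothing about the case where exactly the two $Z$-constrained markings collapse. (Relatedly, your assertion that $c_1$ is \emph{positive} on nonconstant spherical classes is false in this borderline semipositive regime---it is only non-negative for generic $J$---so Chern-$0$ bubbles are also present, though those are not what breaks your count.)

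The paper closes this gap by a genuinely new input: it realizes the Poincar\'e dual of $a$ by a symplectic surface $N$ of positive genus, arranges $J^\infty$ to be integrable on a tubular neighborhood of $N$ with $N$ contained in a local complex hypersurface $D$, and then invokes the Cieliebak--Mohnke tangency theory. In the Gromov limit of curves meeting $N\subset D$ at two distinct points which coalesce at the node, the bubble must have contact of order $2$ with $D$ there; this tangency imposes $2$ additional real conditions, pushing the dimension of the offending stratum down to $-1$. Without some device of this kind, the dimension count alone does not suffice.
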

Given this proposition we conclude by writing $Sq_2(a) = r[M] = rba$, and utilizing (\ref{eq:pi-relation-1}), (\ref{eq:pi-relation-2}) to compute that
\begin{equation}
    Q\Sigma_{[M]}(1) = Q\Sigma_b((t^2+rb)a) = (t^2+rb)Q\Sigma_ba = (t^2+rb)\left(Sq_2(b)a + \sum_i f_i q^{2B_i + T_i}\right)
\end{equation}
where $B_i, T_i \in H_2(M, \Z)$ with $B_i$ torsion-free and $T_i$ torsion. By degree considerations as above we conclude that $c_1(B_i) \leq 1$, and if $c_1(B_i) = 1$ then $f_i = \theta^{x_i}t^{y^i} f'_i$ with $f'_i \in H^*(M, \F_2)$ and $|f'_i| \leq 3$. We then invoke one more proposition proven in Section \ref{sec:uniruling-cohomology-classes}:
\begin{proposition}
\label{prop:bad-case-2}
In the situation above, if $Q\Sigma_{[M]}(1) = \sum_i \theta^{x_i}t^{y_i}f'_i q^{A_i}$ with $c_1(A_i) = 2$ then $|f'_i| > 3$. 
\end{proposition}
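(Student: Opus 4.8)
The plan is to read off the coefficient of $q^A$ in $Q\Sigma_{[M]}(1)$, for $p=2$ and $c_1(A)=2$, directly from the equivariant moduli spaces of Section~\ref{sec:gw-theory-review}, and to bound its cohomological degree from below by a dimension count. Recall that for generic $\omega$-tame $J$ this coefficient is the equivariant Gysin pushforward, along the evaluation $ev_{z_0}$ at the marked point $z_0$ fixed by the domain involution $\iota$, of the equivariant cycle on the moduli space $\mathcal{M}_A$ of $(\Z/2)$-decorated $J$-holomorphic spheres in class $A$ obtained by constraining the evaluations $(ev_{z_+},ev_{z_-})$ at the free $\iota$-orbit $\{z_+,z_-\}$ to a generic point $(x_*,x_*)$ of the diagonal of $M\times M$ — this being the $\Z/2$-invariant cycle representing the point class $[M]$ that is used to define $Q\Sigma_{[M]}$. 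First I would discard the contribution of the free part of the $\Z/2$-action: on the simple locus (and, after the standard semipositive obstruction-bundle correction, on the locus of multiply covered spheres whose covering map is not the $\iota$-quotient) the requirement that the sphere pass through $x_*$ at both $z_+$ and $z_-$ is overdetermined, since $\dim_{\R}\mathcal{M}_A=2n+2c_1(A)=10$ while the two point constraints have total codimension $4n=12$; hence the whole coefficient localizes, in the Smith-theoretic sense, near the fixed locus $\mathcal{M}_A^{\Z/2}$.

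Then I would analyze the fixed locus. An $\iota$-invariant configuration is a double cover $u=v\circ\pi$ of a sphere $v\colon\P^1\to M$ along the fixed branched double cover $\pi\colon\P^1\to\P^1/\iota$, which forces $A=2B$ in $H_2(M,\Z)$ (so the coefficient vanishes unless $A$ is such a double) and hence $c_1(B)=c_1(A)/2=1$; the constraint becomes the single point constraint $v(w)=x_*$ at $w=\pi(z_\pm)$, so for generic $J$ and generic $x_*$ the relevant fixed-locus moduli space $\mathcal{F}$ is cut out of the $(2n+2c_1(B))$-dimensional space of such $v$ by one point constraint, and so is a manifold — or, if the double covers are obstructed, a virtual cycle — of real dimension $2c_1(B)=c_1(A)=2$. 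By the localization formula for $Q\Sigma$, the coefficient of $q^A$ is then the equivariant Gysin pushforward along $ev_{z_0}\colon\mathcal{F}\to M$ (evaluation at the branch point $\pi(z_0)$) of an equivariant characteristic class on $\mathcal{F}$, assembled from $h^{2n}$ — which survives because $St_2([M])=[M]t^{n}$ kills the Stiefel–Whitney corrections coming from the normal bundle of the diagonal — the inverse equivariant Euler class of the normal bundle of $\mathcal{M}_A^{\Z/2}$ in $\mathcal{M}_A$, and the obstruction bundle if present. Since $\dim_{\R}\mathcal{F}=2$, this pushforward raises cohomological degree by $2n-2=4$, so reading off the coefficient of any monomial $\theta^{x}t^{y}$ gives a class $f'$ with $|f'|\ge 4>3$, which is the assertion of the proposition.

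The hard part will be making the two reductions above rigorous precisely in the borderline regime of the proposition, where $M$ is non-monotone with minimal Chern number $N=1$ and $2n=6$: there the locus of multiply covered spheres can have the same virtual dimension as the main stratum, so the obstruction-bundle formalism must genuinely be invoked, and one must bound the dimensions of the boundary strata of the Gromov compactification $\overline{\mathcal{F}}$ so that none of them contributes a class of degree $\le 3$. For the latter I would use that $M^{6}$ is automatically semipositive, together with Proposition~\ref{prop:calabi-yau-classes-cannot-unirule}: a component of Chern number $\le 0$ cannot pass through the generic point $x_*$, so the point constraint sits on a component of Chern number $1$, and every further bubble, having $c_1\le 0$, strictly lowers the expected dimension upon gluing at its node; hence $ev_{z_0}$ still factors through a space of real dimension $\le 2$, and every boundary contribution again lies in $H^{\ge 4}(M)$.
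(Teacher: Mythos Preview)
Your approach is genuinely different from the paper's, and the gap is the localization step. The paper does not attempt to compute the coefficient of $q^A$ in $Q\Sigma_{[M]}(1)$; it only uses that a nonzero coefficient forces a nonempty moduli space. Concretely, the paper lets the perturbation data $(J^i_{z,w},\eta^i_{z,w})$ degenerate to a domain-independent $(J^\infty,0)$, so that if $f'_i\neq 0$ with $|f'_i|\leq 3$ then the Gromov compactification $\overline{\mathcal{M}}_A(C'';p_+,p_+;x)$ is nonempty for some $x$ with $\codim W^s(x)\geq 3$. It then runs through the finite list of stable-map strata (root simple of Chern number $2$; root constant with one Chern-$2$ bubble or two Chern-$1$ bubbles; root of Chern number $1$ with one Chern-$1$ bubble; root a double cover), checking that each has negative expected dimension for generic $J^\infty$, using in the double-cover and attached-bubble cases that $W^s(x)\cap\{p_+\}=\emptyset$ so the input and output markings cannot collide under the covering map. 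This is entirely elementary once one accepts the standard semipositive transversality package.

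Your route, by contrast, asks for a Smith-theoretic localization formula expressing the $q^A$-coefficient of $Q\Sigma_{[M]}(1)$ as a Gysin pushforward from the $\Z/2$-fixed locus, capped with an inverse equivariant Euler class and an obstruction bundle. No such formula is available for quantum Steenrod operations: the defining perturbations $\eta_{z,w}$ destroy the honest $\Z/2$-action on the moduli problem, and even after degenerating to $\eta=0$ one has not identified the count with an equivariant integral to which Atiyah--Bott localization applies. You would also need to control the $t$-denominators introduced by inverting the equivariant Euler class to be sure the $H^*(M)$-degree bound survives, and to handle the obstruction theory of the Chern-$1$ double covers in the $N=1$, $n=3$ regime; you acknowledge both issues but do not resolve them. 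The heuristic dimension counts you make (free locus of virtual dimension $-2$, fixed locus of real dimension $2$) are morally the same numbers that drive the paper's stratum-by-stratum argument, so the quickest fix is to abandon the localization wrapper and argue directly, as the paper does, that each stratum of the unperturbed compactification is empty when $|f'_i|\leq 3$.
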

Thus $c_1(2B_i + T_i) \leq 0$ and we have proven the proposition. 

\end{proof}


\section{Rational and Equivariant Gromov Witten Invariants}
\label{sec:gw-theory-review}
In this section we review the definitions of the moduli spaces giving rise to the usual Gromov-Witten invariants as well as the quantum Steenrod powers.

Let $f$ be a Morse function with critical point set $Crit(f)$, and let $g$ be a metric such that the gradient flow of $f$ is Morse-Smale.  If $x$ is a critical point of $f$ with Morse index $|x|$, then in our conventions the stable manifold $W^s(x)$ has dimension $|x|$, while the unstable manifold $W^u(x)$ has dimension $2n-|x|$. Write $\F_x$ for the $1$-dimensional $\F$-vector space generated by the two orientations of $W^s(x)$ modulo the relation that the sum of the orientations is zero. The Morse complex is 
\begin{equation}
    CM^k(f) = \bigoplus_{|x|=k}\F_x
\end{equation}
and it has a cohomological differential such that its cohomology computes $H^*(M, \F)$. 

Write $\mathcal{J}(M)$ or for the set of compatible almost complex structures on $M$ and write $\mathcal{J}(S^2, M)$ for the set of smooth maps from $S^2$ to $\mathcal{J}(M)$. 

\paragraph{Rational GW theory.} 
Given $A \in \im(\pi_2(M) \to H_2(M, \Z))$ and an element $J_z \in \mathcal{J}(S^2, M)$ with $z \in S^2$, we have the moduli space
\begin{equation}
    \M_{0,3}(M, A, J_z) = \{u: S^2 \to M; \bar\partial_{J_z}u = 0\}.
\end{equation}
This is equipped with maps 
\begin{equation}
    ev_p: \M_{0,3} \to M \text{ for each }p \in S^2
\end{equation}
where we will choose $p\in\{0,1,\infty\}$. Write $Aut(\P^1, \{S\})$ for the group of automorphisms of $\P^1$ which fix $S \subset \P^1$. If $J_z$ is independent of $z$ (i.e., $J_z \in \mathcal{J}(M)$) then we also have moduli spaces $\overline{\M}_{0,2}$ and $\overline{\M}_{0,1}$ defined by quotienting 
$\overline{\M}_{0,3}$ by $Aut(\P^1, \{\infty\})$ and $Aut(\P^1, , \{\infty, 1\})$, respectively. 

Each of the above moduli spaces is contained in its Gromov-Kontsevich compactification, and contains the subspace of simple maps:
\begin{equation}
\M^*_{0, k}(M, A, J_z) \subset \M_{0, k}(M, A, J_z) \subset \overline{\M}_{0, k}(M, A, J_z)
\end{equation}
The methods of \cite{McDuffSalamon-BIG} can be used to establish 
\begin{proposition}
There exist $J_z$ such that
\[ ev_0 \times ev_1 \times ev_\infty: \M^*_{0, 3}(M, A, J_z) \to M \times M \times M\]
is a pseudocycle \cite{McDuffSalamon-BIG} of dimension $2n+2c_1(A)$.
\end{proposition}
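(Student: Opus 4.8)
The plan is to reproduce the standard construction of the rational Gromov--Witten pseudocycle from \cite[Ch.~6--7]{McDuffSalamon-BIG}, adapted to allow the domain-dependent almost complex structures $J_z$. There are three ingredients: transversality on the simple locus, Gromov compactness, and a dimension count for the boundary strata using semipositivity; domain-dependence of $J_z$ only makes transversality of the principal component easier and is immaterial for the bubbles, which are holomorphic for a fixed domain-independent structure. For Step~1 (transversality for simple maps): if $A=0$ the moduli space is the space of constant maps, i.e.\ a copy of $M$, and $ev_0\times ev_1\times ev_\infty$ is the small diagonal $M\hookrightarrow M\times M\times M$, a compact pseudocycle of dimension $2n=2n+2c_1(0)$; so assume $A\neq 0$. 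Passing to a Banach completion (maps and families of class $C^\ell$, with the $C^\infty$ case recovered by Taubes's argument), I would form the universal moduli space of pairs $(u,J_z)$ with $u$ simple and $\bar\partial_{J_z}u=0$. A simple map has an injective point $z_0\in S^2$, i.e.\ $du(z_0)\neq 0$ and $u^{-1}(u(z_0))=\{z_0\}$, and an infinitesimal variation of the family $J_z$ supported near $z_0$ in the domain and near $u(z_0)$ in $M$ realizes any prescribed element of the cokernel of the linearized operator, exactly as in \cite[Ch.~3]{McDuffSalamon-BIG}, the domain-dependence only providing extra room. Hence the universal space is a Banach manifold, the projection to the space of families is Fredholm of index $\mathrm{ind}\,D_u=2n+2c_1(A)$ (genus zero, and no reparametrization quotient since a nonconstant $J_z$ breaks the $\mathrm{PSL}(2,\C)$-symmetry of $S^2$), and Sard--Smale yields a residual set of $J_z$ for which $\M^*_{0,3}(M,A,J_z)$ is a smooth manifold of dimension $2n+2c_1(A)$; a countable intersection over energy bounds makes this work for all $A$ at once.

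For Step~2 (bubbles and the boundary stratification): by Gromov compactness a sequence in $\M^*_{0,3}(M,A,J_z)$ subconverges to a genus-zero stable map whose principal component is $J_z$-holomorphic and whose bubbles, attached at points $z_j\in S^2$, are $J_{z_j}$-holomorphic for the \emph{domain-independent} structure $J_{z_j}\in\mathcal{J}(M)$. One therefore also needs: (i) for generic $J_z$, every simple $J_w$-holomorphic sphere ($w\in S^2$) is regular, which follows from a parametric version of Step~1 applied to the evaluation $\mathcal{J}^\ell(S^2,M)\times S^2\to\mathcal{J}(M)$, together with regularity of the matching conditions at the nodes; (ii) multiply-covered bubbles are controlled by semipositivity as in \cite[Ch.~6]{McDuffSalamon-BIG}, the underlying simple curve of a nonconstant bubble having nonnegative Chern number and the spheres of nonpositive Chern number sweeping out a subset of $M$ of codimension at least $2$; and (iii) with $J_z$ genuinely domain-dependent, non-simple unbubbled maps $u=v\circ\phi$ are generically excluded, since holomorphicity of $u$ forces the $J_z$ to agree along $\mathrm{im}\,dv$ on each fiber $\phi^{-1}(w)$, a positive-codimension condition on the family. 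Combining these, the set $\Omega\subset M\times M\times M$ of limits of sequences in $\M^*_{0,3}$ that do not themselves lie in the image of $\M^*_{0,3}$ is contained in a finite union of smooth manifolds indexed by combinatorial types with at least one node, each of real dimension at most $2n+2c_1(A)-2$: each node costs $2$ in real dimension after accounting for the matching condition and the gluing parameter, and the bubble trees contribute nonpositively by semipositivity. Thus $\dim\Omega\le\dim\M^*_{0,3}(M,A,J_z)-2$.

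For Step~3 (assembling the pseudocycle): the maps $ev_p$ are smooth on the smooth manifold $\M^*_{0,3}(M,A,J_z)$, the closure of the image of $f=ev_0\times ev_1\times ev_\infty$ is compact by Gromov compactness, and by Step~2 the omega-limit set of $f$ has dimension at most $\dim\M^*_{0,3}-2$; after fixing a coherent orientation of $\M^*_{0,3}$ via the determinant line of the linearized operator as in \cite{McDuffSalamon-BIG}, these are exactly the defining properties of $f$ being a pseudocycle of dimension $2n+2c_1(A)$. The main obstacle is Step~2: one must arrange transversality of the principal domain-dependent component and, simultaneously, of all bubbles (holomorphic for the individual domain-independent structures $J_w$, $w\in S^2$) and of their matching conditions, and then run the dimension count over every stratum of the Gromov compactification, invoking semipositivity precisely to bound the multiply-covered contributions --- this is the parametrized, domain-dependent incarnation of the analytic work of \cite[Ch.~6]{McDuffSalamon-BIG}.
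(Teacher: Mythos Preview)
Your proposal is correct and is precisely the approach the paper has in mind: the paper does not give its own proof but simply asserts that the proposition follows from ``the methods of \cite{McDuffSalamon-BIG}''. Your sketch spells out exactly those methods, with the routine adaptations needed for domain-dependent $J_z$ (transversality on the principal component, domain-independent $J_{z_j}$ on the bubbles, and the semipositivity dimension count on the boundary strata).
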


The rational Gromov Witten invariant $\langle a, b, c\rangle_A$ is defined by defining the map on generators of $CM^*(f)$ via
\begin{equation}
\label{eq:three-pointed-gw-invt-definition}
    \langle x, y, z\rangle_A = \#  \M^*_{0, 3}(M, A, J_z) \times_{M \times M \times M} W^u(x) \times W^u(y) \times W^u(z).
\end{equation}
The cardinality of the set on the right hand side is finite whenever $|x|+|y|+|z|-2c_1(A) = 2n$, and the points are counted with sign according to the canonical orientation on the fiber product; otherwise the count is defined to be zero.

\paragraph{Equivariant GW theory.}
The terms the quantum Steenrod operation $Q\Sigma_a$ can also be described in a similar way \cite{SeidelWilkins}. Let 
\begin{equation}
    S^\infty = \{(w_0, w_1, \ldots) | w_i \in \C, w_i = 0 \text{ for } i >>0\} = \bigcup_{k \geq 0} S^k. 
\end{equation}
There is an action of $\Z/p$ on $S^\infty$ where the generator $\sigma$ acts by multiplication by $\zeta = e^{2\pi i/p}$.
There are smooth manifolds with corners $\Delta_i \subset S^\infty$ homeomorphic to disks; the $\Delta_i$ together with their images under the $\Z/p$ action give give $S^\infty$ the structure of a free $\Z/p$-CW complex. 

There is an action of $\Z/p$ on $\P^1$ generated multiplication by $\zeta$ and we write 
\begin{equation}
    C = \P^1, z_{C, 0} = 0, z_{C, 1} = \zeta^{1/2}, z_{C, 2} = \zeta^{3/2}, \ldots, z_{C, p} = \zeta^{-1/2}, z_{C, \infty} = \infty.
\end{equation}

Given an element $J_z \in \mathcal{J}(S^2, M)$, write $Hom(TS^2, TM)$ for the space of linear maps maps from $TS^2$ to $TM$ when each bundle is pulled back to $S^2 \times M$. Note that there is an action of $\sigma$ on $\mathcal{J}(S^2, M)$ and on $Hom(TS^2, TM)$ by pullback.

The operation $Q\Sigma_a$ is defined by choosing perturbation data 
\begin{equation}
\label{eq:quantum-steenrod-perturbation-data}
    \begin{gathered}
    \{J_{z, w}\}_{w \in S^\infty} \in \mathcal{J}(S^2, M) , \{\eta_{z, w}\}_{w \in S^\infty} \in Hom(TS^2, TM),  \\
    J_{\sigma(z), w} = J_{z, \sigma(w)},\; \eta_{\sigma(z), w} = \eta_{z, \sigma(w)}, \;\eta_{z, w}i = -J_z\eta_{z, w}. 
    \end{gathered}
\end{equation} which vary smoothly on each $\Delta_i$, and defining for each $A \in H_2(M, \Z)$, $x_0, \ldots, x_p, x_\infty \in Crit(f)$, the moduli space
\begin{equation}
\label{eq:equivariant-moduli-space}
\M_{A}(\Delta_i \times C, x_0, \ldots, x_p, x_\infty) = \left\{ 
\begin{array}{c|l}
w \in \Delta_i \setminus \partial \Delta_i, &  \bar{\partial}_{J_{w, z}}u = \eta_{w, z}, u_*[C] = A \\ 
u: C \to M & u(z_{C, i}) \in W^u(x_i), i = 0, \ldots, p; u(z_{C, \infty}) \in W^s(x_\infty)
\end{array}\right\}. 
\end{equation}

For generic choices of perturbation data this moduli space has dimension
\begin{equation}
    i + 2c_1(A) + |x_\infty| - \sum_{i=0}^p|x_i|. 
\end{equation}
We then define 
\begin{equation}
    \Sigma_A^i: CM^*(f)\tensor CM^*(f)^{\tensor p} \to CM^{* - i-c_1(A)}(f)
\end{equation}
by using (\ref{eq:equivariant-moduli-space}) to define this map's coefficients. 
This descends to a map on homology, and finally if $a, b \in H^*(M, \F)$, we have
\begin{equation}
    Q\Sigma_a(b) = (-1)^{|a||b|}\sum_A q^A \sum_k \Sigma_A^{2k}(a, b, \ldots b) t^k + (-1)^{|a|+|b|}\Sigma_A^{2k+1}(a, b, \ldots b)t^k\theta. 
\end{equation}

\section{Uniruling Cohomology Classes}
\label{sec:uniruling-cohomology-classes}
In this section we first prove an equivariant symplectic incarnation of a well known fact in algebraic geometry, that a projective variety cannot be uniruled by spheres of low Chern number:

\begin{proposition}
\label{prop:calabi-yau-classes-cannot-unirule}
Write 
\begin{equation}
    Q \Sigma_{[M]}(1) = f_0 + \sum_{A \neq 0} f_A q^A. 
\end{equation}
If $f_A \neq 0$ then $c_1(A) > 1$.
\end{proposition}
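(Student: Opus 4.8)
The plan is to unwind the definition of the coefficient $f_A$ given in Section~\ref{sec:gw-theory-review} into the existence of an honest $J$-holomorphic sphere representing $A$ through a generic point of $M$, and then to invoke the classical dimension count which forbids a (weakly monotone) symplectic manifold from being swept out by spheres of Chern number at most $1$.

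First I would record the relevant indices. Since $[M]$ is Poincar\'e dual to a point, in the Morse complex it is represented by a critical point $x_0$ with $|x_0| = 2n$, so that $W^u(x_0)$ is a single point $p_0$, which for generic Morse--Smale data is a generic point of $M$; the fundamental class $1$ is represented by critical points $x_1,\ldots,x_p$ of index $0$, whose unstable manifolds are dense open and impose no constraint. If $f_A \neq 0$, then for any admissible perturbation data as in (\ref{eq:quantum-steenrod-perturbation-data}) the class $f_A$ is a nonzero cohomology class, so the Morse cochain representing it through the operations $\Sigma^i_A$ has a nonzero coefficient, and hence some moduli space $\M_{A}(\Delta_i \times C, x_0, \ldots, x_p, x_\infty)$ is nonempty (here $i$ and $x_\infty$ may depend on the chosen data, but $x_0$ is always the point-class generator). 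In particular, for every admissible choice there is a solution $u\colon C \to M$ of $\bar\partial_{J_{z,w}} u = \eta_{z,w}$ representing $A \neq 0$ with $u(z_{C,0}) = p_0$ a generic point.

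The heart of the matter is to remove the perturbation. I would choose admissible perturbation data with $\eta \to 0$ and $J_{z,w} \to J$ independent of $z$ and generic --- note that a constant $\omega$-compatible almost complex structure satisfies the $\Z/p$-equivariance constraints of (\ref{eq:quantum-steenrod-perturbation-data}) automatically. Since $\omega(A)$ bounds the energy of the solutions, Gromov compactness produces a stable $J$-holomorphic map $v$ representing $A$, a bubble tree of genuine $J$-holomorphic spheres, one of whose non-constant components passes through the generic point $p_0$ (either the component carrying the limit of $z_{C,0}$, or, if that is a ghost component, a non-constant component joined to it through a chain of constant components, each equal to $p_0$). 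Write $A_1$ for the class of this component and $A_2, \ldots$ for the classes of the remaining components, so $A = A_1 + A_2 + \cdots$. Because $J$ is now independent of the domain point, the reparametrization symmetry is restored: for generic $J$ the moduli space $\M^*_{0,1}(M, A_1, J)$ of simple spheres with one marked point has dimension $2c_1(A_1) + 2n - 4$, and its evaluation map meets the generic point $p_0$, so its image has full dimension; this forces $2c_1(A_1) + 2n - 4 \geq 2n$, i.e.\ $c_1(A_1) \geq 2$ (if the component through $p_0$ is multiply covered, the same bound follows by applying this to its underlying simple sphere, which also passes through $p_0$). Weak monotonicity of $M$ forces $c_1(A_j) \geq 0$ for each of the remaining components: this is clear in the monotone and spherically Calabi--Yau cases, and when $N \geq n-2$ a simple $J$-holomorphic sphere of negative Chern number would lie in a moduli space of negative expected dimension and hence cannot occur for generic $J$. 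Therefore $c_1(A) = c_1(A_1) + \sum_{j \geq 2} c_1(A_j) \geq 2 > 1$.

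I expect the main obstacle to be the compactness-and-transversality bookkeeping for the limiting bubble tree $v$ in the semipositive, non-monotone regime: one must know that for generic $J$ every component of $v$ occupies a moduli space of the expected dimension, so that the Chern numbers of all components are controlled and the evaluation argument on the component through $p_0$ remains valid even when $p_0$ is attained at a node. This is exactly the semipositive Gromov--Witten package of \cite{McDuffSalamon-BIG}, and it is the analysis already invoked in Remark~\ref{remark:compactness-uniruled}; a minor additional point is to check that the chosen perturbation data can be made to limit to $(J, 0)$ with $J$ constant, which follows from the form of the equivariance constraints in (\ref{eq:quantum-steenrod-perturbation-data}).
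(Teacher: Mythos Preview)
Your argument is correct in substance but contains one bookkeeping slip. In $Q\Sigma_{[M]}(1)$ the \emph{repeated} input (the one appearing $p$ times at $z_{C,1},\ldots,z_{C,p}$) is the subscript $[M]$, while the single input at $z_{C,0}$ is the argument $1$; this is forced by the $\Z/p$-equivariance and by the degree shift $|Q\Sigma_b(c)| = |c| + p|b|$, and is what the paper uses in its own proof (the display formula for $Q\Sigma_a(b)$ in Section~\ref{sec:gw-theory-review} has the letters swapped). So the point constraint lives at $z_{C,1},\ldots,z_{C,p}$, not at $z_{C,0}$. This does not damage your proof: you only use a single point constraint, so after relabeling ``$z_{C,0}$'' to ``$z_{C,1}$'' everything goes through verbatim.

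Modulo that, your route is genuinely more economical than the paper's. The paper retains all $p$ point-constrained marked points, stratifies $\overline{\mathcal{M}}_A(C';p_+,\ldots,p_+)$ according to the bubble-tree combinatorics and the way the marked points can collapse when the root component multiply covers a simple sphere, and then rules out each stratum by a dimension count that uses the $p$-fold constraint. You instead isolate one non-constant component through the generic point, pass to its underlying simple sphere, and invoke the classical count $\dim \M^*_{0,1}(M,A',J) = 2n + 2c_1(A') - 4$ together with transversality of $ev_1$ to $\{p_0\}$; semipositivity handles the remaining components. This is exactly the standard ``uniruling forces $c_1\ge 2$'' argument, and it suffices here. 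The paper's more elaborate stratification is not needed for this proposition; its payoff comes in Propositions~\ref{prop:bad-case} and~\ref{prop:bad-case-2}, where several inputs carry nontrivial constraints and one must track how marked points merge under covers.
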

\begin{proof}
To define Morse cohomology let us use a Morse function with a unique minimum $p_-$ and a unique maximum $p^+$. 
We choose a sequence of perturbation data $J^i_{z, w}, \eta^i_{z, w}$ as in (\ref{eq:quantum-steenrod-perturbation-data}) which limit to $J^\infty_{z, w} = J^\infty \in \mathcal{J}(M), \eta^\infty_{z, w} = 0$; If $f_A$ is nonzero then the moduli spaces (\ref{eq:equivariant-moduli-space}) defining the corresponding count are nonempty for all $i$. Thus, the Gromov compactification of the corresponding moduli space for the perturbation data $(J^\infty_{z, w}, 0)$ is also nonempty. Let 
\begin{equation}
    \mathcal{M}_A(C, x_0, \ldots, x_p; x_\infty) = \left\{ 
\begin{array}{c|l}
u: C \to M &  \bar{\partial}_{J^\infty}u =0, u_*[C] = A\\ 
 & u(z_{C, i}) \in W^u(x_i), i = 0, \ldots, p; u(z_{C, \infty}) \in W^s(x_\infty)
\end{array}\right\} 
\end{equation}
and let $\overline{\mathcal{M}}_A(C, x_0, \ldots, x_p; x_\infty)$ be its Gromov compactification. Writing $f_A = \theta^{f_A^\theta}t^{f_A^t}f'_A$ with $f'_A \in H^*(M, \F)$ we thus have that for some $x \in Crit(f)$ whose coefficient for a chain level representative of $f'_A$ is nonzero, we have that
\begin{equation}
\overline{\M_A(\Delta_{f_A^\theta + 2f_A^t} \times C , p_-, p_+, \ldots, p_+, x)} = \Delta_{f_A^\theta + 2f_A^t} \times \overline{\mathcal{M}}_A(C, p_-,p_+ \ldots, p_+; x)
\end{equation}
is nonempty, where the left hand side is defined using using the perturbation data $(J^\infty_{z, w}, 0)$. Thus in particular we have that the Gromov compactification $\overline{\mathcal{M}}_A(C'; p_+, \ldots, p_+)$ of 
\begin{equation}
  \mathcal{M}_A(C'; p_+, \ldots, p_+) = \left\{\begin{array}{c|l}
u: C \to M &  \bar{\partial}_{J^\infty}u =0, u_*[C] = A\\ 
 & u(z_{C, i}) = p_+, i = 1, \ldots, p;
\end{array}\right\} 
\end{equation}
(where $C'$ of course refers to $C$ with $z_{C, 0}$ and $z_{C, \infty}$ forgotten) is nonempty. 
\begin{remark}
\label{remark:compactness-uniruled}
This standard argument implies that for arbitrary $J^\infty$, there is a $J^\infty$-holomorphic sphere through any $p^+ \in M$ if $M$ is $\F_p$-uniruled for some $p$. This is explained and first stated in \cite[Remark 2]{S-PRQS}.
\end{remark}
We will argue that this moduli space must be empty if we choose $J^\infty$ generically. 

Each component of the moduli space corresponds to a tree with a root vertex; the root vertex can be identified with $C$, and each vertex corresponds to a pseudoholomorphic sphere for $J^\infty$,  Moreover, if we collapse all non-root spheres then the points $z_{C, 1}, \ldots, z_{C, p}$ must map to corresponding points on $C$. In particular these points lie in distinct tree components. 

We will further stratify $\overline{\mathcal{M}}_A(C'; p_+, \ldots, p_+)$ by additionally considering how many of the marked points $z_{C, 1}, \ldots, z_{C, p}$ coincide upon replacing the root component with the $J^\infty$-holomorphic sphere that it may cover. The evaluation maps to $M$ used to define the intersection of the corresponding space of curves with the stable and unstable manifolds factor through smooth manifolds of maps in which we replace each sphere in the tree with the simple sphere that it covers; we will argue that the dimension of this manifold is too low for each stratum of $\overline{\mathcal{M}}_A(C'; p_+, \ldots, p_+)$ to be nonempty. 

On a semi-positive symplectic manifold, the Chern number of any $J^\infty$-holomorphic sphere is non-negative. Since the total Chern number of the tree is at most $1$, at most one sphere is Chern number $1$ (and is thus a simple sphere) while the remaining spheres are Chern number zero. In particular if there is a nonempty stratum then $c_1(A) \geq 0$. 
\begin{remark}
\label{rk:no-negative-Chern-numbers}
The same method of argument shows that for any $a, b \in H^*(M, \F)$, if $Q\Sigma_a(b) = f_0 + \sum_{A \neq 0} f_A q^A$ then $f_A \neq 0$ implies $c_1(A) \geq 0$. 
\end{remark}

We note that adding spheres of Chern number zero to the tree always decreases the dimension of the associated manifold. If the Chern number of the root component is $1$ then it is simple; in this case it suffices to upper bound the dimension of the configuration where the only sphere is the root component, which has dimension $2n+2-p(2n) < 0$ (unless $n=1$, in which case the computation of \cite{Wilkins} for $M=S^2$ together with Remark \ref{remark:compactness-uniruled} makes it possible to verify the theorem directly).

If the root component is not simple then its Chern number is zero and it also covers a sphere of zero Chern number, which be simple or constant. Whether it is simple or constant turns out not to matter for the dimension computation. Suppose the total Chern number is $1$. We reduce the case where the Chern number of the root component is zero to the case where either the sphere of Chern number $1$ is in a tree component that contains none of the $z_{C, i}$, or that it is; and there are no other holomorphic spheres. We then divide the cases into the number of distinct points there are after taking the images of the $z_{C, i}$ by the covering map; the worst case is manifestly when all the points are collapsed to one. So there are only two cases to check and they both have negative expected dimension. 

If the total Chern number is $0$ then the same reasoning applies but we have one more case to check: namely, the case when there are no components except the root component, which is a cover of a Chern number zero sphere on which all the marked points $z_{C, i}$ have been sent to a single point. In this case it is crucial that we have taken $J^\infty$ to be domain independent; the expected dimension of this configuration is
$2n-2n-4 <0$, where the $4$ comes from the automorphisms of the final curve that fix the single marked point. 
\end{proof}

Next, we prove the final technical proposition claimed in Section \ref{sec:steenrod-coefficients-must-be-calabi-yau}:
\begin{proof}[Proof of Proposition \ref{prop:bad-case-2}]
We argue exactly as above. The argument using Gromov compactness shows that the Gromov compactification $\overline{\M}_A(C''; p_+, \ldots, p_+; x)$ of 
\begin{equation}
      \mathcal{M}_A(C''; p_+, \ldots, p_+) = \left\{\begin{array}{c|l}
u: C \to M &  \bar{\partial}_{J^\infty}u =0, u_*[C] = A\\ 
 & u(z_{C, i}) = p_+, i = 1, \ldots, p; u(z_{C, \infty}) \in W^s(x)
\end{array}\right\} 
\end{equation}
(where $C''$ refers to $C'$ with $z_{C, 0}$ and $z_{C, \infty}$ forgotten) is nonempty for some $x \in Crit(f)$ with nonzero coefficient in a chain-level representative of $f'_i$. Because $\overline{W^s}(x)$ is a manifold with corners of dimension equal to that of $W^s(x)$, for the purpose of the dimension computation we can assume that the codimension constraint on $z_{C, \infty}$ is as if $z_{C, \infty}$ was constrained to lie strictly in $W^s(x)$, since the corresponding moduli spaces where $z_{C, \infty}$ maps to lower-dimensional strata of $\overline{W^s}(x)$ are only easier to rule out. We note also that $W^s(x) \cap \{p_+\} = \emptyset$.

If the root component is simple and has Chern number $2$ then it suffices to check the case where there are no additional Chern number $0$ spheres attached, and this case is ruled out by a dimension count. If the root component is a point then either there are two Chern number $1$ spheres, in which case we are in a moduli space of dimension strictly lower than that where the root component has Chern number $1$; or there is one Chern number two sphere, and we can reduce to the case where are no other spheres. The Chern number $2$ sphere cannot be attached to an input since $W^s(x) \cap \{p_+\} = \emptyset$; so it must be attached to the output. In this case, since $W^s(x) \cap \{p_+\} = \emptyset$, even if the sphere is a multiple cover, the covering map must still send the two marked points to distinct points, and so the dimension computation shows that this configuration cannot exist as $10-2-3-6<0$. If the root component has Chern number $1$, then all other spheres are Chern number zero or simple, and for the purpose of the dimension count we can assume that there is only one more sphere of Chern number $1$ (which is then also simple); there are two cases and each are ruled out by counting dimensions. Finally, if the root component is a double cover then all other spheres have Chern number zero, so we can assume there are no other spheres; as before the double cover can collapse the two inputs but cannot collapse an input and an output, and the dimension count $10-2-3-6<0$ excludes this configuration as well. 
\end{proof}

Finally, we prove, the remaining proposition used in Section \ref{sec:steenrod-coefficients-must-be-calabi-yau}:
\begin{proof}[Proof of Proposition \ref{prop:bad-case}]
We have $p=2$ fixed.
The computation is similar to the previous one: 
we will choose $(J^i_{z, w}, \eta^i_{z, w})$ converging to some $(J^\infty_{z, w}= J^\infty, 0)$ and analyze the resulting strata of the limit. However, to exclude one of the possible strata we will have to choose a particular Morse function to calculate cohomology, and a more careful constraint on $J^\infty$ and on the $J^i_{z, w}$ and $\eta^i_{z, w}$. We will have $x \in Crit(f)$ have nonzero coefficient in a chain-level representative of $f'_i$, as before.

Specifically, by \cite[II.27]{thom}, the Poincare dual homology class to $a$ can be represented by an oriented $2$-dimensional closed submanifold $N$ of $M$; by attaching a local handle to $N$ we can assume that the genus of $N$ is positive. By the $h$-principle it can be represented by a symplectic submanifold \cite{hprinciple}. We choose the Morse function $f$ such that there is a single critical point $y$ representing $a$ in integral cohomology, such that the closure of $W^u(y)$ is exactly $N$ and such that $N \cap \overline{W^s(x)}$ has dimension $1$: this can be done by choosing a Morse function on $N$ with a single maximum, extending by a negative definite quadratic form on a tubular neighborhood $U$ of $N$ with closure $\bar{U}$, and then extending to a Morse function on all of $M$. We require that $J^\infty|_{\bar{U}}$ is integrable,  with $N$ a complex submanifold; the projection from $U$ to $N$ holomorphic, making $\bar{U}$ into a closure of a neighborhood of zero of a holomorphic vector bundle structure on the normal bundle to $N$. We additionally fix the germ of a complex hypersurface $D$ containing $N$. Finally, we require that the $J^i_{z, w}$ agree with $J^\infty$ on $\bar{U}$ for all $w$ and $i$, and that $\eta^{i}_{z, w}$ is zero on $U$ for all $w$ and $i$. This constraint does not violate transversality because \emph{there are no $J^\infty$ holomorphic spheres contained entirely in $\bar{U}$}. Indeed, if there there was such a sphere, its projection to $N$ would give a holomorphic map to a surface of positive genus, which would have to factor through a point of $N$; so the curve would lie in a fiber of the projection  to $N$, which would define a bounded holomorphic map from a sphere to a $\C^2$, contradicting Liouville's theorem. 

Finally, we require that $J^\infty$ is chosen such that the \emph{moduli space of simple holomorphic spheres with two marked points, one passing through $N$ and one passing through a stratum of $\overline{W^s(x)}$, and having contact of order $2$ to $D$} is of expected dimension. The notion of spheres having contract of positive order to a holomorpic divisor was introduced in \cite{cieliebak-mohnke-1}, \cite{cieliebak-mohnke-2}, and an elaborate theory for the case where the divisor is locally defined near a point was established in \cite{mcduff-siegel-tangency}. The transversality results \cite[Prop. 6.9]{cieliebak-mohnke-1} and \cite[Prop 3.1]{cieliebak-mohnke-2} (see also \cite[Lemma 2.2.1]{mcduff-siegel-tangency}) suffice to show that this moduli space is of expected dimension for generic $J^\infty$; the dimension of this moduli space is $2$ less than the dimension of the corresponding moduli space of simple spheres with no order of contact  contraint at $D$.

As in the proof of Proposition \ref{prop:bad-case-2} we will assume that the constraint on the output is a codimension $6-|f'_i| = \codim W^s(x) = \codim\overline{W^s(x)} \geq 3$ constraint since $\overline{W^s(x)}$ is a manifold with corners compactifying $W^s(x)$.

If the root component has Chern number $2$ then all other components have Chern number zero, and so we can assume the only component is the root componnet for the purpose of upper bounding the dimension. If the root component is simple then the moduli space has dimension
$6+4-4-4-(6-|f'_i|)<0$. If the root component is a two-fold cover then the resulting curve is simple; as before we can reduce to the case where there are no other spheres.

If the images of $z_{C, 1}$ and $z_{C, 2}$ are distinct the dimension is only lower than in the previous case. If the images of $z_{C, 1}$ and $z_{C, 2}$ agree then we upper bound the dimension by $6+2-2-(6-|f'_i|)-4)<0$. 

If the Chern number of the root component is $1$ then it is simple. We can reduce to the case where there is exactly one other sphere of Chern number $1$, attahed to the root component either away from the marked points or at one of the marked points; all three cases have negative expected dimension.

Suppose the root component is a constant map. We reduce to two cases: either there is one additional sphere of Chern number $2$ and no other spheres, or there there are two additional spheres attached at different points. If there is a sphere of Chern number $2$ then either it is attached to an input, in which case if it is simple then lies in a moduli space of dimension bounded by $6+4-2-4-5$ (where we have used that one of the marked points on this sphere lies in the \emph{intersection} of the stable manifold of the output with the unstable manifold of the input). Alternatively, it is attached to the output; but then by \cite[Lemma 7.2b]{cieliebak-mohnke-1} this sphere must have contact of order $2$ to the divisor $D$ containing $N$, and thus lies in expected dimension $2$ lower than if it was only required to be contained in $N$; for a usual transverse intersection the expected dimension (if this sphere is simple) is $6+4-2-(6-|f'_i|)-4 \leq 1$, and so with the tangency constraint we have a negative upper bound. If this sphere is a double-cover then it might collapse the $N$ constraint and the $W^{s}(x)$ constraint, but then the unique marked point lies in $N \cap W^{s}(x)$ which has dimension $1$ and thus the expected dimension of this configuration is negative as well. This latter argument also resolves the case where the Chern number $2$ sphere was attached to an input and was a double cover.

Finally, suppose the Chern number of the root component is zero. Then we can again reduce to the case where there is additionally just one Chern number $2$ sphere or $2$ Chern number $1$ spheres. 

If the simplification of the root component sends $z_{C, 1}$ and $z_{C, 2}$ to distinct points just the root component together with its intersections with the two input cycles cannot exist for dimension reasons.

If the simplification of the root component sends $z_{C, 1}$ and $z_{C, 2}$ to the same point then: in the case with one more sphere of Chern number $2$ we attach to the input or the output, and get dimension at most $-1$ (these are degenerations of previously studied configurations) and if we have two Chern number $1$ spheres then we distribute them among input and output in the $3$ different ways possible and upper bound dimensions separately. 
\end{proof}

\printbibliography

@article{mcduff-siegel-tangency,
author = {McDuff, Dusa and Siegel, Kyler},
title = {Counting curves with local tangency constraints},
journal = {Journal of Topology},
volume = {14},
number = {4},
pages = {1176-1242},
keywords = {53DXX (primary)},
doi = {https://doi.org/10.1112/topo.12204},
url = {https://londmathsoc.onlinelibrary.wiley.com/doi/abs/10.1112/topo.12204},
eprint = {https://londmathsoc.onlinelibrary.wiley.com/doi/pdf/10.1112/topo.12204},
abstract = {Abstract We construct invariants for any closed semipositive symplectic manifold which count rational curves satisfying tangency constraints to a local divisor. More generally, we introduce invariants involving multibranched local tangency constraints. We give a formula describing how these invariants arise as point constraints are pushed together in dimension four, and we use this to recursively compute all of these invariants in terms of Gromov–Witten invariants of blowups. As a key tool, we study analogous invariants which count punctured curves with negative ends on a small skinny ellipsoid.},
year = {2021}
}

@article{thom,
  doi = {10.1007/bf02566923},
  url = {https://doi.org/10.1007/bf02566923},
  year = {1954},
  month = dec,
  publisher = {European Mathematical Society - {EMS} - Publishing House {GmbH}},
  volume = {28},
  number = {1},
  pages = {17--86},
  author = {Ren{\'{e}} Thom},
  title = {Quelques propri{\'{e}}t{\'{e}}s globales des vari{\'{e}}t{\'{e}}s diff{\'{e}}rentiables}
}

@article{cieliebak-mohnke-1,
author = {Kai Cieliebak and Klaus Mohnke},
title = {{Symplectic hypersurfaces and transversality in Gromov-Witten theory}},
volume = {5},
journal = {Journal of Symplectic Geometry},
number = {3},
publisher = {International Press of Boston},
pages = {281 -- 356},
year = {2007},
doi = {jsg/1210083200},
URL = {https://doi.org/}
}

@article{cieliebak-mohnke-2,
  doi = {10.1007/s00222-017-0767-8},
  url = {https://doi.org/10.1007/s00222-017-0767-8},
  year = {2017},
  month = nov,
  publisher = {Springer Science and Business Media {LLC}},
  volume = {212},
  number = {1},
  pages = {213--295},
  author = {K. Cieliebak and K. Mohnke},
  title = {Punctured holomorphic curves and Lagrangian embeddings}
}

@book{hatcher,
      author        = "Hatcher, Allen",
      title         = "{Algebraic topology}",
      publisher     = "Cambridge Univ. Press",
      address       = "Cambridge",
      year          = "2000",
      url           = "https://cds.cern.ch/record/478079",
}

@article{ginzburg-conley,
  doi = {10.4007/annals.2010.172.1127},
  url = {https://doi.org/10.4007/annals.2010.172.1127},
  year = {2010},
  publisher = {Annals of Mathematics},
  volume = {172},
  number = {2},
  pages = {1127},
  author = {Viktor L. Ginzburg},
  title = {The Conley conjecture}
}

@Book{hprinciple,
 author = {Eliashberg, Y. and Mishachev, N.},
 title = {Introduction to the h-principle},
 publisher = {American Mathematical Society},
 year = {2002},
 address = {Providence, Rhode Island},
 isbn = {978-0821832271}
 }

@article{CGG,
author = {E. Cineli, and  V. L. Ginzburg, and B. Z. G\"{u}rel},
title = {Pseudo-rotations and holomorphic curves},
year={May 2019},
note = {Preprint arXiv:1905.07567},
}

@article{CGG2,
	author = {E. Cineli, and  V. L. Ginzburg, and B. Z. G\"{u}rel},
	title = {From pseudo-rotations to holomorphic curves via quantum {S}teenrod squares},
	year={Sept. 2019},
	note = {Preprint arXiv:1909.11967},
}

@article{S-PRQS,
	author = {E. Shelukhin},
	title = {Pseudo-rotations and {S}teenrod squares},
	year={2020},
	journal ={J. Mod. Dyn, to appear.},
	note = {Available at arxiv:1905.05108},
}

@article{S-PRQSR,
	author = {E. Shelukhin},
	title = {Pseudo-rotations and {S}teenrod squares revisited},
	year={Sept 2019},
	journal={Math. Res. Letters, to appear},
	note = {Preprint arXiv:1909.12315},
}

@article{McDuff-uniruled,
	AUTHOR = {McDuff, Dusa},
	TITLE = {Hamiltonian {$S^1$}-manifolds are uniruled},
	JOURNAL = {Duke Math. J.},
	FJOURNAL = {Duke Mathematical Journal},
	VOLUME = {146},
	YEAR = {2009},
	NUMBER = {3},
	PAGES = {449--507},
}

@article{Wilkins,
		author={Wilkins, Nicholas},
		title={A Construction of the Quantum {S}teenrod Squares and Their Algebraic Relations},
		journal = {Geom. Topol},
		year={2020}
	}

@article{Wilkins-PSS,
		author={ Wilkins, Nicholas},
		title={Quantum {S}teenrod Squares and the Equivariant Pair-of-Pants in Symplectic Cohomology},
		note={Preprint, arXiv:1810.02738, 2018},
	}

@article{Seidel-formal,
	author={ Seidel, P.},
	title={Formal groups and quantum cohomology},
	year={2019},
	note={Preprint, arXiv:1910.08990},
}

@incollection{Fukaya-Steenrod,
	AUTHOR = {Fukaya, Kenji},
	TITLE = {Morse homotopy and its quantization},
	BOOKTITLE = {Geometric topology ({A}thens, {GA}, 1993)},
	SERIES = {AMS/IP Stud. Adv. Math.},
	VOLUME = {2},
	PAGES = {409--440},
	PUBLISHER = {Amer. Math. Soc., Providence, RI},
	YEAR = {1997},
}

@article{SSW,
author = {P. Seidel and E. Shelukhin and N. Wilkins},
note = {Work in progress},
}

@article{SeidelWilkins,
	author={Paul Seidel and Nicholas Wilkins},
	title={Covariant constancy of quantum {S}teenrod operations}, 
	year={2021},
	note ={Preprint arXiv:2102.06432},
}

@incollection{HS-Novikov,
	AUTHOR = {Hofer, H. and Salamon, D. A.},
	TITLE = {Floer homology and {N}ovikov rings},
	BOOKTITLE = {The {F}loer memorial volume},
	SERIES = {Progr. Math.},
	VOLUME = {133},
	PAGES = {483--524},
	PUBLISHER = {Birkh\"{a}user, Basel},
	YEAR = {1995}}

@article{SZhao-pants,
author ={E. Shelukhin and J. Zhao},
title ={The ${\Z}/p{\Z}$-equivariant product-isomorphism in fixed point {F}loer homology},
journal={Journal of Symplectic Geometry, to appear},
year={2022},
note = {Available at arXiv:1905.03666}
}

@article{Seidel-pants,
	author={Seidel, Paul},
	TITLE = {The equivariant pair-of-pants product in fixed point {F}loer
	cohomology},
	JOURNAL = {Geom. Funct. Anal.},
	FJOURNAL = {Geometric and Functional Analysis},
	VOLUME = {25},
	YEAR = {2015},
	NUMBER = {3},
	PAGES = {942--1007}
}

@article {Char,
    AUTHOR = {Charette, Fran\c{c}ois},
     TITLE = {A geometric refinement of a theorem of {C}hekanov},
   JOURNAL = {J. Symplectic Geom.},
  FJOURNAL = {The Journal of Symplectic Geometry},
    VOLUME = {10},
      YEAR = {2012},
    NUMBER = {3},
     PAGES = {475--491},
}

@article {Floer3,
    AUTHOR = {Floer, Andreas},
     TITLE = {Symplectic fixed points and holomorphic spheres},
   JOURNAL = {Comm. Math. Phys.},
  FJOURNAL = {Communications in Mathematical Physics},
    VOLUME = {120},
      YEAR = {1989},
    NUMBER = {4},
     PAGES = {575--611},
}

@Book{McDuffSalamon-BIG,
    Author = {D. {McDuff} and D. {Salamon}},
    Title = {{$J$-holomorphic curves and symplectic topology. 2nd ed.}},
    FJournal = {{Colloquium Publ. American Mathematical Society}},
    Journal = {{Colloq. Publ., Am. Math. Soc.}},
    Volume = {52},
    Edition = {2nd ed.},
    Pages = {xiv + 726},
    Year = {2012},
    Publisher = {Providence, RI: American Mathematical Society (AMS)},
%    MSC2010 = {53-02 53D05 53D45 53D35 57R17 37J05 58J05},
}

\end{document}